\newtheorem{theorem}{Theorem}
\newtheorem{corollary}{Corollary}
\newtheorem{definition}{Definition}
\newtheorem{lemma}{Lemma}
\newtheorem{proposition}{Proposition}
\newtheorem{remark}{Remark}
\newcommand{\Sum}{\displaystyle\sum}
\newcommand{\Lim}{\displaystyle\lim}
\newcommand{\Frac}[2]{\displaystyle\frac{#1}{#2}}
\newcommand{\Prod}{\displaystyle\prod}
\begin{document}

\title{Almost Periodic Functions in terms of Bohr's Equivalence Relation$^\ast$}
\footnotetext{$^\ast$This paper is a modified version of the article ``Almost periodic functions in terms of Bohr's equivalence relation" (Ramanujan J., 2017, https://doi.org/10.1007/s11139-017-9950-1). As we pointed out in our corrigendum (``Correction to: Almost periodic functions in terms of Bohr's equivalence relation", Ramanujan J., 2019, https://doi.org/10.1007/s11139-019-00150-3), we correct a gap found in the previous version of 2017. This gap is due to a property that does not generally hold. However, under the additional assumption of existence of an integral basis, it does hold. Furthermore, the equivalence relation which is inspired by that of Bohr is revised to adapt correctly the situation in the general case. The structure of the paper (including numbering) is the same as that of the published version. Moreover, the proofs of propositions \ref{DefEquiv} 
and \ref{prop} 
have been written in more detail.
}

\author{J.M. Sepulcre}
\address{Department of Mathematics\\ University of
Alicante, 03080-Alicante\\
Spain} \email{JM.Sepulcre@ua.es}

\author{T. Vidal}
\address{University of
Alicante, 03080-Alicante\\
Spain} \email{tmvg@alu.ua.es}


\subjclass[2010]{30D20, 30B50, 11K60, 30Axx}


\keywords{Almost periodic functions; Exponential sums; Riemann zeta function; Bochner's theorem}

\begin{abstract}
In this paper we introduce an equivalence relation on the classes of almost periodic functions of a real or complex variable which is used to refine Bochner's result that characterizes these spaces of functions. In fact, with respect to the topology of uniform
convergence, we prove that the limit points of the family of translates of an almost periodic function are precisely the functions which are equiva\-lent to it, which leads us to a characterization of almost periodicity. In particular we show that any exponential sum which is equivalent to the Riemann zeta function, $\zeta(s)$, can
be uniformly approximated in $\{s=\sigma+it:\sigma>1\}$ by certain vertical translates of $\zeta(s)$.
\end{abstract}

\maketitle

\section{Introduction}

The theory of almost periodic functions, which was created and developed in its main features by H. Bohr during the $1920$'s, opened a way to study a wide class of trigonometric series of the general type and even exponential series (see for example \cite{Besi,Bohr,Bohr2,Corduneanu1,Corduneanu,Jessen}). This theory shortly acquired numerous applications to various areas of mathematics, from harmonic analysis to differential equations.

In the case of the functions that are defined on the real numbers, the notion of almost periodicity leads us to generalize purely periodic functions. Let $f(t)$ be a real or complex function of an unrestricted real variable $t$. In this paper, in order to be almost periodic, $f(t)$ must be continuous, and for every $\varepsilon>0$ there corresponds a number $l=l(\varepsilon)>0$ such that each interval of length $l$ contains a number $\tau$ satisfying
$|f(t+\tau)-f(t)|\leq\varepsilon$ for all $t$. As in \cite{Corduneanu}, we will denote as $AP(\mathbb{R},\mathbb{C})$ the space of almost periodic functions in the sense of this definition (Bohr's condition), which coincides with the notion of uniform almost periodicity used in \cite{Besi}. As in classical Fourier analysis, every almost periodic function is bounded and is associated with a Fourier series with real frequencies.

A very important result of this
theory is the approximation theorem according to which the class of almost
periodic functions $AP(\mathbb{R},\mathbb{C})$ is identical with the class of those functions which can be
approximated uniformly by trigonometric polynomials of the type
$$a_1e^{i\lambda_1t}+\ldots+a_ne^{i\lambda_nt}$$
with arbitrary real exponents $\lambda_j$ and arbitrary complex coefficients $a_j$. Moreover,
S. Bochner observed that Bohr's notion of almost periodicity of a function $f$ is equivalent to the relative compactness, in the sense of uniform convergence, of the family of its translates $\{f(t+h)\}$, $h\in\mathbb{R}$. So, in later literature, some authors defined almost periodic functions in this way (e.g. see \cite{Bochner,Corduneanu,Fink}).

Now we focus our attention on the theory of the almost periodic functions of a complex variable, which was theorized in \cite{BohrAnalytic} (see also \cite{Besi,Bohr,Corduneanu1,Favorov1,Jessen}).
Let $f(s)$ be a function of a complex variable $s=\sigma+it$ which is analytic in a vertical strip $U=\{s=\sigma+it:\alpha<\sigma<\beta\}$ ($-\infty\leq\alpha<\beta\leq\infty$). By analogy with the case of a real variable, it is called almost periodic in $U$ if for any
 $\varepsilon>0$ and every reduced strip $U_1=\{s=\sigma+it:\sigma_1\leq\sigma\leq\sigma_2\}$ of $U$ there exists a number $l=l(\varepsilon)>0$ such that each interval of length $l$ contains a number $\tau$ satisfying the inequality
$|f(s+i\tau)-f(s)|\leq \varepsilon$ for $s$ in $U_1$.
This definition implies in particular that, for any fixed $\sigma\in(\alpha,\beta)$, the function $h_{\sigma}(t):=f(\sigma+it)$ is an almost periodic function of the real variable $t$. Moreover, the requirement above implies that the almost periodicity should take place \textit{uniformly} on the various straight lines.
In addition, the Fourier series of these functions are obtainable from a certain exponential series of the form $\sum_{n\geq 1} a_ne^{\lambda_n s}$ with complex coefficients $a_n$ and real exponents $\lambda_n$. This associated series is called the Dirichlet series of the given almost periodic function (see \cite[p.147]{Besi}, \cite[p.77]{Corduneanu1} or \cite[p.312]{Jessen}). 

The space of almost periodic functions in a vertical strip $U\subset \mathbb{C}$, which will be denoted as $AP(U,\mathbb{C})$, coincides with the set of the functions which can be approximated uniformly in every reduced strip 
of $U$ by exponential polynomials with complex coefficients and real exponents (see \cite[Theorem 3.18]{Corduneanu1}). These aproximating exponential polynomials can be found by Bochner-Fej\'{e}r's summation (see, in this regard, \cite[Chapter 1, Section 9]{Besi}).
In the same manner, Bohr's notion of almost periodicity of a function $f(s)$ in a vertical strip $U$ is equivalent to the relative compactness of the set of its vertical translates, $\{f(s+ih)\}$, $h\in\mathbb{R}$, with the topology of the uniform convergence on reduced strips. 

On the other hand, we also recall that the class of general Dirichlet series consists of the series that take
the form
$\Sum_{n\geq 1}a_ne^{-\lambda_n s},\ a_n\in\mathbb{C},$
 where
$\{\lambda_n\}$ is a strictly increasing sequence of posi\-tive numbers
tending to infinity.
In particular, a classical ordinary Dirichlet series is the Riemann zeta
function, given by
$\zeta(s):=\Sum_{n\geq 1}\frac{1}{n^s},$
that converges absolutely in
$\{s\in\mathbb{C}:\operatorname{Re}s>1\}$ and admits an analytic
continuation over the whole complex plane with only a simple pole
at $s=1$. A remarkable property of $\zeta(s)$, which is satisfied in a certain location on its critical strip, is called universality, or Voronin's universality theorem, and it states, roughly speaking, that any non-vanishing analytic function can be uniformly
approximated by certain shifts of the Riemann zeta-function. A similar universality property has been shown for other functions, such as the Dirichlet $L$-functions. For a complete study on this very interesting property, we suggest \cite{KaraVoro,Lauri,Lauri2}.

Regarding general Dirichlet series, H. Bohr introduced an equivalence relation among them that led to exceptional results (see for example Bohr's equivalence theorem in \cite{Apostol}).
By analogy with Bohr's theory, in this paper we establish an equi\-valence relation on the classes of exponential sums of the form
\begin{equation*}
\sum_{j\geq 1}a_je^{\lambda_js},\ a_j\in\mathbb{C},\ \lambda_j\in\Lambda,
\end{equation*}
where $\Lambda=\{\lambda_1,\ldots,\lambda_j,\ldots\}$ is an arbitrary countable set of distinct real numbers (not necessarily unbounded), and we extend it to the context of almost periodic functions.
In this way, with respect to the topology of uniform
convergence, the main result of our paper shows that, fixed an almost periodic function, the limit points of the set of its translates are precisely the functions which are equiva\-lent to it (see theorems \ref{mth0} and \ref{mth} in this paper for the real and complex case respectively).
This means that Bochner's result is now refined in the sense that we show that the condition of almost periodicity is equivalent to that every sequence of translates has a subsequence that converges uniformly to an equivalent function (see theorems \ref{mthh} and \ref{mthhcc} in this paper).

In particular, in terms of Voronin's universality theorem,
we show that any exponential sum which is equivalent to the Riemann zeta function can
be uniformly approximated in $\{s=\sigma+it:\sigma>1\}$ by certain vertical translates of the Riemann zeta-function (see Theorem \ref{mtrzf} in this paper). For example, we assure the
existence of two increasing unbounded sequences $\{\tau_n\}_{n\geq 1}$ and $\{\varsigma_n\}_{n\geq 1}$
of positive numbers such that the sequences of functions
$\{\zeta(s+i\tau_n)\}$ and $\{\zeta(s+i\varsigma_n)\}$, $n\in\mathbb{N}$, converge uniformly to
$\zeta(s)$ and $\zeta_{\lambda}(s)$ on every reduced strip of
$\{s\in\mathbb{C}:\operatorname{Re}s>1\}$ respectively, where $\zeta_{\lambda}(s):=\sum_{n\geq 1}\frac{\lambda(n)}{n^s}$ is the ordinary Dirichlet series for the Liouville function $\lambda(n)$.
To the best of our knowledge, these results
have not been considered in the literature.
Finally, as a
consequence, we will obtain an alternative
demonstration of a known result related to the infimum of
$|\zeta(s)|$ on certain regions in the half-plane $\sigma\geq 1$
(see corollaries \ref{positive} and \ref{ult} in this paper).

\section{The exponential sums of the classes $\mathcal{S}_{\Lambda}$ and Bohr's equivalence relation on them}

Consider the following equivalence relation which constitutes our starting point.
\begin{definition}\label{DefEquiv0}
Let $\Lambda$ be an arbitrary countable subset of distinct real numbers, $V$ the $\mathbb{Q}$-vector space generated by $\Lambda$ ($V\subset \mathbb{R}$), and $\mathcal{F}$
the $\mathbb{C}$-vector space of arbitrary functions $\Lambda\to\mathbb{C}$. 
We define
a relation $\sim$ on $\mathcal{F}$ by $a\sim b$ if there exists a $\mathbb{Q}$-linear map $\psi:V\to\mathbb{R}$ such that
$$b(\lambda)=a(\lambda)e^{i\psi(\lambda)},\ (\lambda\in\Lambda).$$
\end{definition}

It is immediate that $\sim$ is an equivalence relation. The reader can observe that this equivalence relation is based on that of \cite[p.173]{Apostol} which was defined for general Dirichlet series and was characterized in terms of a completely multiplicative function \cite[Theorem 8.12]{Apostol}. Bohr used it in that case in order to get so-called Bohr's equivalence theorem.

\begin{remark}\label{unodrx}
With respect to the relation $\sim$ on $\mathcal{F}$, we note the following easy facts to clarify and simplify later proofs:
\begin{itemize}
\item[i)] $a\sim b$ implies that $ac\sim bc$.
\item[ii)] If $\Lambda'\subset \Lambda$ and functions $a',b'$ on $\Lambda'$ are extended to $a,b$ on $\Lambda$ by
$$a(\lambda)=a'(\lambda),\ (\lambda\in\Lambda')$$
$$b(\lambda)=b'(\lambda),\ (\lambda\in\Lambda')$$
$$a(\lambda)=b(\lambda)=0,\ (\lambda\in\Lambda\setminus\Lambda'),$$
then $a\sim b$ if and only if $a'\sim b'$.
\end{itemize}
\end{remark}

In this paper we will use Definition \ref{DefEquiv0} to introduce an equivalence relation on certain subclasses of exponential sums, which will be referred from now on as
 expressions of the type
$$P_1(p)e^{\lambda_1p}+\ldots+P_j(p)e^{\lambda_jp}+\ldots,$$
where the frequencies $\lambda_j$ are complex numbers and the $P_j(p)$ are polynomials in $p$.  What is more, we will consider some functions which are associated with a concrete
subclass of these exponential sums, where the parameter $p$ will be changed by $s=\sigma+it$ in the complex case, or by $t$ in the real case.

\begin{definition}
Let $\Lambda=\{\lambda_1,\lambda_2,\ldots,\lambda_j,\ldots\}$ be an arbitrary countable set of distinct real numbers, which we will call a set of exponents or frequencies. We will say that an exponential sum is in the class $\mathcal{S}_{\Lambda}$ if it is a formal series of type
 \begin{equation}\label{eqqnew}
\sum_{j\geq 1}a_je^{\lambda_jp},\ a_j\in\mathbb{C},\ \lambda_j\in\Lambda.
\end{equation}
Also, we will say that $a_1,a_2,\ldots,a_j,\ldots$ are the coefficients of this exponential sum.
\end{definition}

It is clear that expression (\ref{eqqnew}) is not necessarily associated with a convergent series that defines an holomorphic function. Precisely, we call it formal series to distinguish it from an ordinary
series. In fact, in the theory of formal series, the parameter $p$ of (\ref{eqqnew}) is never
assigned a numerical value and questions of convergence or divergence are
irrelevant. In this respect, we operate on formal series algebraically as though they were
convergent series and the expression $e^{\lambda_jp}$ is simply a device for
locating the position of the $j$th coefficient $a_j$. In this way, if $A_1(p)=\sum_{j\geq 1}a_je^{\lambda_jp}$ and $A_2(p)=\sum_{j\geq 1}b_je^{\lambda_jp}$ are two formal series in $\mathcal{S}_{\Lambda}$, then $A_1(p)=A_2(p)$ means that $a_j=b_j$ for each $j\geq 1$. Moreover, $A_1(p)+A_2(p):=\sum_{j\geq 1}(a_j+b_j)e^{\lambda_jp}$ and $A_1(p+h):=\sum_{j\geq 1}a_je^{\lambda_jh}e^{\lambda_jp}$ for all $h\in\mathbb{C}$.

Based on Definition \ref{DefEquiv0}, we next consider the following equivalence relation on the classes $\mathcal{S}_{\Lambda}$. In fact, we will say that two exponential sums in $\mathcal{S}_{\Lambda}$ are equivalent when their coefficients adapt to Definition \ref{DefEquiv0} in the following sense.

\begin{definition}[mod.]\label{DefEquiv00}
Given $\Lambda=\{\lambda_1,\lambda_2,\ldots,\lambda_j,\ldots\}$ a set of exponents, consider $A_1(p)$ and $A_2(p)$ two exponential sums in the class $\mathcal{S}_{\Lambda}$, say
$A_1(p)=\sum_{j\geq1}a_je^{\lambda_jp}$ and $A_2(p)=\sum_{j\geq1}b_je^{\lambda_jp}.$
We will say that $A_1$ is equivalent to $A_2$ if for each integer value $n\geq1$, with $n\leq\sharp\Lambda$, it is satisfied
$a_{n}^*\sim b_{n}^*$, where $a_{n}^*,b_{n}^*:\{\lambda_1,\lambda_2,\ldots,\lambda_{n}\}\to\mathbb{C}$ are the functions given by $a_{n}^*(\lambda_j):=a_j$ y $b_{n}^*(\lambda_j):=b_j$, $j=1,2,\ldots,n$ and $\sim$ is in Definition \ref{DefEquiv0}.
\end{definition}

We will use $\sim$ for the equivalence relation introduced in Definition \ref{DefEquiv0} and $\shortstack{$_{{\fontsize{6}{7}\selectfont *}}$\\$\sim$}$ for that of Definition \ref{DefEquiv00}.

Let
$G_{\Lambda}=\{g_1, g_2,\ldots, g_k,\ldots\}$ be a basis of the
$\mathbb{Q}$-vector space generated by a set $\Lambda=\{\lambda_1,\lambda_2,\ldots\}$ of exponents, 
which implies that $G_{\Lambda}$ is linearly independent over the rational numbers and each $\lambda_j$ is expressible as a finite linear combination of terms of $G_{\Lambda}$, say
\begin{equation}\label{errej}
\lambda_j=\sum_{k=1}^{i_j}r_{j,k}g_k,\ \mbox{for some }r_{j,k}\in\mathbb{Q},\ i_j\in\mathbb{N}.
\end{equation}
In this paper, by abuse of language, we will also say that $G_{\Lambda}$ is a basis for $\Lambda$. Moreover, we will say that $G_{\Lambda}$ is an \textit{integral basis} for $\Lambda$ when $r_{j,k}\in\mathbb{Z}$ for any $j,k$.
Now, by taking this
into account, we next show that the equivalence relation introduced in Definition \ref{DefEquiv00} on the classes $\mathcal{S}_{\Lambda}$ can be characterized in terms of a basis for $\Lambda$. 

\begin{proposition}[mod.]\label{DefEquiv}
Given $\Lambda=\{\lambda_1,\lambda_2,\ldots,\lambda_j,\ldots\}$ a set of exponents, consider $A_1(p)$ and $A_2(p)$ two exponential sums in the class $\mathcal{S}_{\Lambda}$, say
$A_1(p)=\sum_{j\geq1}a_je^{\lambda_jp}$ and $A_2(p)=\sum_{j\geq1}b_je^{\lambda_jp}.$
Fixed a basis $G_{\Lambda}$ for $\Lambda$, for each $j\geq1$ let $\mathbf{r}_j\in \mathbb{R}^{\sharp G_{\Lambda}}$ be the vector of rational components verifying (\ref{errej}).  
Then $A_1\ \shortstack{$_{{\fontsize{6}{7}\selectfont *}}$\\$\sim$}\ A_2$ 
if and only if for each integer value $n\geq1$, with $n\leq\sharp\Lambda$, there exists $\mathbf{x}_n=(x_{n,1},x_{n,2},\ldots,x_{n,k},\ldots)\in \mathbb{R}^{\sharp G_{\Lambda}}$
such that $b_j=a_j e^{<\mathbf{r}_j,\mathbf{x}_n>i}$ for $j=1,2,\ldots,n$.\vspace{0.1cm} 

\noindent Furthermore, if $G_{\Lambda}$ is an integral basis for $\Lambda$ then $A_1\ \shortstack{$_{{\fontsize{6}{7}\selectfont *}}$\\$\sim$}\ A_2$ 
if and only if there exists $\mathbf{x}_0=(x_{0,1},x_{0,2},\ldots,x_{0,k},\ldots)\in \mathbb{R}^{\sharp G_{\Lambda}}$
such that $b_j=a_j e^{<\mathbf{r}_j,\mathbf{x}_0>i}$ for every $j\geq 1$. 
\end{proposition}
\begin{proof}
For each integer value $n\geq 1$, let $V_n$ be the $\mathbb{Q}$-vector space generated by $\{\lambda_1,\ldots,\lambda_n\}$, $V$ the $\mathbb{Q}$-vector space generated by $\Lambda$, and
$G_{\Lambda}=\{g_1, g_2,\ldots, g_k,\ldots\}$ a basis of $V$.
If $A_1\ \shortstack{$_{{\fontsize{6}{7}\selectfont *}}$\\$\sim$}\ A_2$, by Definition \ref{DefEquiv00} for each integer value $n\geq 1$, with $n\leq\sharp\Lambda$, there exists a $\mathbb{Q}$-linear map $\psi_n:V_n\to\mathbb{R}$ such that
$b_j=a_je^{i\psi_n(\lambda_j)},\ j=1,2\ldots,n.$
Hence
$b_j=a_je^{i\sum_{k=1}^{i_j}r_{j,k}\psi_n(g_k)},\ j=1,2\ldots,n$
or, equivalently,
$b_j=a_j e^{i<\mathbf{r}_j,\mathbf{x}_n>},\ j=1,2\ldots,n,$
with $\mathbf{x}_n:=(\psi_n(g_1),\psi_n(g_2),\ldots,\psi_n(g_p),0,\ldots)$, where $p=\max\{i_1,i_2,\ldots,i_n\}$. 
Conversely, suppose the existence, for each integer value $n\geq 1$, of a vector $\mathbf{x}_n=(x_{n,1},x_{n,2},\ldots,x_{n,k},\ldots)\in \mathbb{R}^{\sharp G_{\Lambda}}$
such that $b_j=a_j e^{<\mathbf{r}_j,\mathbf{x}_n>i}$, $j=1,2\ldots,n$. Thus a $\mathbb{Q}$-linear map $\psi_n:V_n\to\mathbb{R}$ can be defined from $\psi_n(g_k):=x_{n,k}$, $k\geq 1$. Therefore $\psi_n(\lambda_j)=\sum_{k=1}^{i_j}r_{j,k}\psi(g_k)=$ $<\mathbf{r}_j,\mathbf{x}_n>,\ j=1,2\ldots,n,$
and the result follows.

Now, suppose that $G_{\Lambda}$ is an integral basis for $\Lambda$ and $A_1\ \shortstack{$_{{\fontsize{6}{7}\selectfont *}}$\\$\sim$}\ A_2$.
Thus, by above, for each fixed integer value $n\geq 1$, let $\mathbf{x}_n=(x_{n,1},x_{n,2},\ldots)\in\mathbb{R}^{\sharp G_{\Lambda}}$ be a vector such that
$b_j=a_j e^{i<\mathbf{r}_j,\mathbf{x}_n>},\ j=1,2\ldots,n.$ Since each component of $\mathbf{r}_j$ is an integer number, without loss of generality, we can take $\mathbf{x}_n\in[0,2\pi)^{\sharp G_{\Lambda}}$ as the unique vector in $[0,2\pi)^{\sharp G_{\Lambda}}$ satisfying the above equalities, where we assume $x_{n,k}=0$ for any $k$ such that $r_{j,k}=0$ for $j=1,\ldots,n$. Therefore, under this assumption, if $m>n$ then $x_{m,k}=x_{n,k}$ for any $k$ so that $x_{n,k}\neq 0$. In this way, we can construct a vector $\mathbf{x}_0=(x_{0,1},x_{0,2},\ldots,x_{0,k},\ldots)\in [0,2\pi)^{\sharp G_{\Lambda}}$ such that $b_j=a_j e^{<\mathbf{r}_j,\mathbf{x}_0>i}$ for every $j\geq 1$. Indeed, if $r_{1,k}\neq 0$ then the component $x_{0,k}$ is chosen as $x_{1,k}$, and if $r_{1,k}=0$ then
each component $x_{0,k}$ is defined as $x_{n+1,k}$ where $r_{j,k}=0$ for $j=1,\ldots,n$ and $r_{n+1,k}\neq 0$.
%
Conversely, if there exists $\mathbf{x}_0=(x_{0,1},x_{0,2},\ldots,x_{0,k},\ldots)\in \mathbb{R}^{\sharp G_{\Lambda}}$
such that $b_j=a_j e^{<\mathbf{r}_j,\mathbf{x}_0>i}$ for every $j\geq 1$, then it is clear that $A_1\ \shortstack{$_{{\fontsize{6}{7}\selectfont *}}$\\$\sim$}\ A_2$ under Definition \ref{DefEquiv00}.
\end{proof}

\vspace{-0.1cm}If $\Lambda$ admits an integral basis, note that the set of all exponential sums $A(p)$ in an equivalence class $\mathcal{G}$ in $\mathcal{S}_{\Lambda}/\shortstack{$_{{\fontsize{6}{7}\selectfont *}}$\\$\sim$}$ can be determined by a function $E_{\mathcal{G}}:\mathbb{R}^{\sharp G_{\Lambda}}\rightarrow \mathcal{S}_{\Lambda}$ of the form
\begin{equation*}\label{2.4.000}
E_{\mathcal{G}}(\mathbf{x}):=\sum_{j\geq1}a_je^{<\mathbf{r}_j,\mathbf{x}>i}e^{\lambda_jp}\text{, }
\mathbf{x}=(x_1,x_2,\ldots,x_k,\ldots)\in%
\mathbb{R}^{\sharp G_{\Lambda}},
\end{equation*}%
where $a_1,a_2,\ldots a_j,\ldots$ are the coefficients of an exponential sum in $\mathcal{G}$ and the $\mathbf{r}_j$'s are the vectors of integer components associated with a prefixed integral basis $G_{\Lambda}$ for $\Lambda$.


We will use $\shortstack{$_{{\fontsize{6}{7}\selectfont *}}$\\$\sim$}$ by restriction to the case of the exponential sums in $\mathcal{S}_{\Lambda}$ of a real or complex variable, and analogously for trigonometric polynomials. It is obvious that $\shortstack{$_{{\fontsize{6}{7}\selectfont *}}$\\$\sim$}$ is independent of the basis $G_{\Lambda}$ for $\Lambda$. 

Finally, we next show that the translates $A(p+i\tau)$, $\tau\in\mathbb{R}$, of an exponential sum $A(p)$ in $\mathcal{S}_{\Lambda}$ are invariant with respect to the equivalence class generated by the relation $\shortstack{$_{{\fontsize{6}{7}\selectfont *}}$\\$\sim$}$. 

\begin{lemma}\label{lem}
Given $\Lambda$ a set of exponents, let $A(p)\in\mathcal{S}_{\Lambda}$. Then the exponential sums included in
$\mathcal{T}_A=\{A_{\tau}(p):=A(p+i\tau):\tau\in\mathbb{R}\}$ are in the same equivalence class of $\mathcal{S}_{\Lambda}/\shortstack{$_{{\fontsize{6}{7}\selectfont *}}$\\$\sim$}$. 
\end{lemma}
\begin{proof}
Let $A(p)=\sum_{j\geq 1}a_je^{\lambda_jp}\ \mbox{with }a_j\in\mathbb{C},\ \lambda_j\in\Lambda$. Then for all real number $\tau$ the sum $A_{\tau}(p):=A(p+i\tau)$, defined formally as $\sum_{j\geq 1}a_je^{i\tau\lambda_j}e^{\lambda_jp}$, can be written as
$A_{\tau}(p)=\sum_{j\geq 1}b_je^{\lambda_jp},$
with $b_j:=a_je^{i\tau\lambda_j}=a_je^{i\tau<\mathbf{r}_j,\mathbf{g}>}=a_je^{i<\mathbf{r}_j,\tau\mathbf{g}>}$, where the vectors $\mathbf{r}_j$ and $\mathbf{g}$ are defined above. Hence, taking $\mathbf{x}_n=\tau\mathbf{g}$ for any integer value $n\geq 1$, Proposition \ref{DefEquiv} holds, i.e. $A\ \shortstack{$_{{\fontsize{6}{7}\selectfont *}}$\\$\sim$}\  A_{\tau}$ for all real $\tau$.
\end{proof}

\section{The finite exponential sums of the classes $\mathcal{P}_{\Lambda}$}\label{section3}

In this section we are going to consider the following classes of finite exponential sums, which can also be viewed as subclasses of those $\mathcal{S}_{\Lambda}$ of the previous section.

\begin{definition}
Let $\Lambda=\{\lambda_1,\ldots,\lambda_n\}$ be a set of $n\geq 1$ distinct real numbers, which we will call a set of exponents or frequencies. We will say that a function $f:\mathbb{C}\mapsto\mathbb{C}$ (resp. $f:\mathbb{R}\mapsto\mathbb{C}$) is in the class $\mathcal{P}_{\Lambda}$ (resp. $\mathcal{P}_{\mathbb{R},\Lambda}$) if it is a finite exponential sum of the form
 \begin{equation}\label{eqq0new}
f(s)=a_1e^{\lambda_1s}+\ldots+a_ne^{\lambda_ns},\ a_j\in\mathbb{C},\ \lambda_j\in\Lambda,\ j=1,\ldots,n.
\end{equation}
\begin{equation}\label{eqq0new2}
\mbox{(Resp. }f(t)=a_1e^{i\lambda_1t}+\ldots+a_ne^{i\lambda_nt},\ a_j\in\mathbb{C},\ \lambda_j\in\Lambda,\ j=1,\ldots,n.\mbox{)}
\end{equation}
\end{definition}

The functions $f(s)$, $s=\sigma+it$, of type (\ref{eqq0new}) are also called exponential polynomials or Dirichlet polynomials and they are entire functions (see also for example \cite{equivalenceclasses}). Besides, the functions $f(t)$ of type (\ref{eqq0new2}) are also called trigonometric polynomials and they are also obtained from functions of type (\ref{eqq0new}) by just restricting their domain to a vertical line $\sigma=\sigma_0,$ with $\sigma_0\in\mathbb{R}$.

The equivalence relation which was introduced in Definition \ref{DefEquiv00} can naturally be applied to the exponential sums in $\mathcal{P}_{\Lambda}$ and $\mathcal{P}_{\mathbb{R},\Lambda}$.
In the context of equivalent finite exponential sums, we next prove an important theorem that will later be used in order to prove one of our main results in this paper.

\begin{theorem}\label{prop3}
Given $\Lambda=\{\lambda_1,\lambda_2,\ldots,\lambda_n\}$ a finite set of exponents, let
$f_1(s)=\sum_{j=1}^na_je^{\lambda_j s}\ \mbox{and }f_2(s)=\sum_{j=1}^nb_je^{\lambda_j s}$
be two equivalent functions in the class $\mathcal{P}_{\Lambda}$. Fixed $\sigma_0,\sigma_1\in\mathbb{R}$, with $\sigma_0<\sigma_1$ and $\varepsilon>0$, there exists a relatively dense set of real numbers $\tau$ such that
$$|f_1(s+i\tau)-f_2(s)|<\varepsilon\ \ \forall s\in\{\sigma+it\in\mathbb{C}:\sigma_0\leq\sigma\leq\sigma_1\}.$$
\end{theorem}
\begin{proof}
Let $G_{\Lambda}=\{g_1,\ldots, g_m\}$, for a certain $m\geq1$, be linearly independent over the rationals so that each $\lambda_j\in\Lambda$ can be expressible as a linear combination of its terms, say
\begin{equation}\label{u}
\lambda_j=\sum_{k=1}^{m}r_{j,k}g_k,\ \mbox{for some }r_{j,k}=\frac{p_{j,k}}{q_{j,k}}\in\mathbb{Q},\ j=1,2,\ldots,n.
\end{equation}
Consider $\varepsilon>0$, $q:=\operatorname{lcm}(q_{j,k}: j=1,\ldots,n,k=1,\ldots,m)$, $r:=\max\{|r_{j,k}|: j=1,\ldots,n,k=1,\ldots,m\}>0$, $a:=\max\{|a_j|:j=1,2,\ldots,n\}>0$ and $E:=\max\{e^{\lambda_j\sigma_0},e^{\lambda_j\sigma_1}:1\leq j\leq n\}$. Since $f_1\ \shortstack{$_{{\fontsize{6}{7}\selectfont *}}$\\$\sim$}\ f_2$, there exists a vector of real numbers $\mathbf{x}_0=(x_{0,1},x_{0,2},\ldots,x_{0,m})$ such that
\begin{equation}\label{un}
b_j=a_j e^{<\mathbf{r}_j,\mathbf{x}_0>i}=a_je^{i\sum_{k=1}^{m}r_{j,k}x_{0,k}},\ j=1,2,\ldots,n.
\end{equation}
Now, as the numbers $
c_{k}=\frac{g_k}{2\pi q},\text{ }k=1,2,\ldots,m,
$ are rationally independent,
we next apply Kronecker's theorem
\cite[p.382]{Hardy} with the following choice:
$c_k$, $\varepsilon_1=\frac{ 1}{2\pi q }\cdot\frac{\varepsilon/2}{a\cdot m\cdot n\cdot r\cdot E}>0$ and
$
d_{k}=\frac{x_{0,k}}{2\pi q},\text{
}k=1,2,\ldots,m.
$
In this manner we assure the existence of a real number $\tau_1>d>0$ and integer numbers
$e_{1},e_{2},\ldots,e_{m}$ such that
\[
\left\vert \tau_1 c_k-e_k-d_k\right\vert=\left|\frac{\tau_1 g_k}{2\pi q}-e_k-\frac{x_{0,k}}{2\pi q}\right| <\varepsilon_1,
\]
that is
\begin{equation}\label{un2}
\tau_1 g_k=2\pi qe_k+x_{0,k}+\eta_k, \mbox{with  }|\eta_k|<2\pi q\varepsilon_1.
\end{equation}
Therefore, from (\ref{u}) and (\ref{un}), with $\sigma_0\leq\sigma\leq\sigma_1$ and $t\in \mathbb{R}$, we have
$$|f_1(\sigma+it+i\tau_1)-f_2(\sigma+it)|=\left|\sum_{j=1}^na_je^{\lambda_j (\sigma+it)}e^{i\tau_1\lambda_j}-\sum_{j=1}^n a_je^{\lambda_j (\sigma+it)}e^{i\sum_{k=1}^{m}r_{j,k}x_{0,k}}\right|\leq$$
$$\sum_{j=1}^n|a_j|e^{\lambda_j\sigma}\left|e^{i\tau_1\lambda_j}-e^{i\sum_{k=1}^{m}r_{j,k}x_{0,k}}\right|\leq a\sum_{j=1}^n e^{\lambda_j\sigma}\left|e^{i\tau_1\lambda_j}-e^{i\sum_{k=1}^{m}r_{j,k}x_{0,k}}\right|=$$
$$a\sum_{\substack{1\leq j\leq n\\\lambda_j<0}}e^{\lambda_j\sigma}\left|e^{i\tau_1\lambda_j}-e^{i\sum_{k=1}^{m}r_{j,k}x_{0,k}}\right|+a\sum_{\substack{1\leq j\leq n\\\lambda_j\geq 0}}e^{\lambda_j\sigma}\left|e^{i\tau_1\lambda_j}-e^{i\sum_{k=1}^{m}r_{j,k}x_{0,k}}\right|\leq$$
$$a\left(\sum_{\substack{1\leq j\leq n\\\lambda_j<0}}e^{\lambda_j\sigma_0}\left|e^{i\tau_1\lambda_j}-e^{i\sum_{k=1}^{m}r_{j,k}x_{0,k}}\right|+\sum_{\substack{1\leq j\leq n\\\lambda_j\geq 0}}e^{\lambda_j\sigma_1}\left|e^{i\tau_1\lambda_j}-e^{i\sum_{k=1}^{m}r_{j,k}x_{0,k}}\right|\right)\leq$$
$$a E\sum_{j=1}^n\left|e^{i\tau_1\lambda_j}-e^{i\sum_{k=1}^{m}r_{j,k}x_{0,k}}\right|=a E\sum_{j=1}^n\left|e^{i\tau_1\sum_{k=1}^{m} r_{j,k}g_k}-e^{i\sum_{k=1}^{m}r_{j,k}x_{0,k}}\right|,$$
which, from (\ref{un2}), is equal to
$$a E\sum_{j=1}^n\left|e^{i\sum_{k=1}^{m} (r_{j,k}2\pi qe_k+r_{j,k}x_{0,k}+r_{j,k}\eta_k)}-e^{i\sum_{k=1}^{m}r_{j,k}x_{0,k}}\right|=$$
$$a E\sum_{j=1}^n\left|e^{i\sum_{k=1}^{m}r_{j,k}\eta_k}-1\right|\leq a E\sum_{j=1}^n\left|\sum_{k=1}^{m}r_{j,k}\eta_k\right|\leq$$
$$anr E\sum_{k=1}^{m}\left|\eta_k\right|<anr E\sum_{k=1}^{m}\frac{\varepsilon/2}{a\cdot m\cdot n\cdot r\cdot E}=\varepsilon/2.$$
That is
\begin{equation}\label{jhg}
|f_1(\sigma+it+i\tau_1)-f_2(\sigma+it)|< \varepsilon/2,\ \mbox{with }\sigma_0\leq\sigma\leq\sigma_1 \mbox{ and }t\in \mathbb{R}.
\end{equation}
Moreover, since $f_1(s)$ is an
almost-periodic function, there exists a real number $l=l(\varepsilon)$%
\ such that every interval of length $l$ on the imaginary axis
contains at
least one translation number $i\tau$, associated with $\varepsilon$, satisfying
\begin{equation}\label{bgt}
\left\vert f_1(\sigma+it+i\tau)-f_1(\sigma+it)\right\vert \leq \varepsilon/2\ \mbox{for all }\sigma+it \mbox{ on a given reduced strip}.
 \end{equation}
Consequently, from (\ref{jhg}) and (\ref{bgt}) we deduce the existence of a relatively dense set of real numbers $\tau$ such that any $s\in\{\sigma+it\in\mathbb{C}:\sigma_0\leq\sigma\leq\sigma_1\}$ satisfies
$$|f_1(s+i(\tau+\tau_1))-f_2(s)|\leq |f_1(s+i(\tau+\tau_1))-f_1(s+i\tau_1)|+|f_1(s+i\tau_1)-f_2(s)|<\varepsilon.$$
This proves the result.
\end{proof}

As an immediate consequence of Theorem \ref{prop3}, we obtain the following corollary.

\begin{corollary}\label{corol3}
Given $\Lambda=\{\lambda_1,\lambda_2,\ldots,\lambda_n\}$ a finite set of exponents, let
$f_1(t)=\sum_{j=1}^na_je^{i\lambda_j t}$ and $f_2(t)=\sum_{j=1}^nb_je^{i\lambda_j t}$
be two equivalent functions in the class $\mathcal{P}_{\mathbb{R},\Lambda}$. Fixed $\varepsilon>0$, there exists a relatively dense set of real numbers $\tau$ such that
$$|f_1(t+\tau)-f_2(t)|<\varepsilon\ \ \forall t\in\mathbb{R}.$$
\end{corollary}

In the same manner that Bohr's notion of almost periodicity is equivalent to that of Bochner, the fact that the set of real numbers $\tau$ satisfying the results above is relatively dense seems to indicate that a similar property is satisfied in our context. In this sense, with respect to the topology of uniform convergence, we include the following important result on the equivalence classes in either $\mathcal{P}_{\mathbb{R},\Lambda}/\shortstack{$_{{\fontsize{6}{7}\selectfont *}}$\\$\sim$}$ or $\mathcal{P}_{\Lambda}/\shortstack{$_{{\fontsize{6}{7}\selectfont *}}$\\$\sim$}$.
\begin{proposition}\label{proppp}
Let $\Lambda$ be a finite set of exponents and $\mathcal{G}$ an equivalence class in either $\mathcal{P}_{\mathbb{R},\Lambda}/\shortstack{$_{{\fontsize{6}{7}\selectfont *}}$\\$\sim$}$ or $\mathcal{P}_{\Lambda}/\shortstack{$_{{\fontsize{6}{7}\selectfont *}}$\\$\sim$}$. Then $\mathcal{G}$ is sequentially compact.
\end{proposition}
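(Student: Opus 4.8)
The plan is to realize the class $\mathcal{G}$ as the continuous image of a compact torus, and then to extract convergent subsequences from that torus. Since $\Lambda=\{\lambda_1,\dots,\lambda_n\}$ is finite, its $\mathbb{Q}$-span is finite dimensional, and after clearing the denominators of the finitely many rationals occurring in the expansions of $\lambda_1,\dots,\lambda_n$ with respect to an arbitrary $\mathbb{Q}$-basis, one obtains an \emph{integral} basis $G_{\Lambda}=\{g_1,\dots,g_m\}$ for $\Lambda$; write $\lambda_j=\sum_{k=1}^m r_{j,k}g_k$ with $\mathbf{r}_j=(r_{j,1},\dots,r_{j,m})\in\mathbb{Z}^m$. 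Fix a representative $\sum_{j=1}^n a_j e^{\lambda_j p}$ of $\mathcal{G}$. By the integral-basis part of Proposition \ref{DefEquiv} and the description of $E_{\mathcal{G}}$ given immediately after it, the class is exactly the image of this parametrization, namely
$$\mathcal{G}=\Big\{\sum_{j=1}^n a_j e^{i\langle\mathbf{r}_j,\mathbf{x}\rangle}e^{\lambda_j p}\ :\ \mathbf{x}\in\mathbb{R}^m\Big\}.$$

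Because each $\mathbf{r}_j$ has integer entries, the coefficients $a_j e^{i\langle\mathbf{r}_j,\mathbf{x}\rangle}$ are unchanged when $\mathbf{x}$ is replaced by $\mathbf{x}+2\pi\mathbf{k}$ for $\mathbf{k}\in\mathbb{Z}^m$. Hence $E_{\mathcal{G}}$ descends to a well-defined surjective map $\Phi\colon\mathbb{T}^m\to\mathcal{G}$ from the compact torus $\mathbb{T}^m=(\mathbb{R}/2\pi\mathbb{Z})^m$, where I endow the target with the topology of uniform convergence: on every reduced strip in the $\mathcal{P}_{\Lambda}$ case, and on all of $\mathbb{R}$ in the $\mathcal{P}_{\mathbb{R},\Lambda}$ case.

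Next I would verify that $\Phi$ is continuous. Fixing a reduced strip $\sigma_0\leq\sigma\leq\sigma_1$ and setting $E_j:=\max\{e^{\lambda_j\sigma_0},e^{\lambda_j\sigma_1}\}$, for $\mathbf{x},\mathbf{y}\in\mathbb{R}^m$ one has, uniformly on the strip,
$$\big|\Phi(\mathbf{x})(s)-\Phi(\mathbf{y})(s)\big|\leq\sum_{j=1}^n|a_j|\,E_j\,\big|e^{i\langle\mathbf{r}_j,\mathbf{x}\rangle}-e^{i\langle\mathbf{r}_j,\mathbf{y}\rangle}\big|,$$
a finite sum each of whose terms tends to $0$ as $\mathbf{y}\to\mathbf{x}$, by continuity of $\theta\mapsto e^{i\theta}$ and of the inner product; the real case is identical with $E_j$ replaced by $1$ (uniformly on $\mathbb{R}$). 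Thus $\Phi$ is continuous into the uniform topology.

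Finally I would conclude sequential compactness directly. Given any sequence $(f_\ell)_\ell$ in $\mathcal{G}$, choose $\mathbf{x}^{(\ell)}\in\mathbb{T}^m$ with $f_\ell=\Phi(\mathbf{x}^{(\ell)})$. As $\mathbb{T}^m$ is a compact metric space it is sequentially compact, so some subsequence $\mathbf{x}^{(\ell_i)}$ converges to a point $\mathbf{x}^{\ast}\in\mathbb{T}^m$; by continuity of $\Phi$, $f_{\ell_i}=\Phi(\mathbf{x}^{(\ell_i)})\to\Phi(\mathbf{x}^{\ast})\in\mathcal{G}$ in the uniform topology. Hence every sequence in $\mathcal{G}$ has a subsequence converging, uniformly on reduced strips (resp.\ on $\mathbb{R}$), to an element of $\mathcal{G}$, which is precisely sequential compactness. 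The only delicate point is the reduction to the torus, which rests on the integral basis furnished by the finiteness of $\Lambda$ together with Proposition \ref{DefEquiv}; once the integrality of the $\mathbf{r}_j$ guarantees that the parametrization factors through $\mathbb{T}^m$, everything reduces to the continuity of a finite sum, so no serious obstacle remains.
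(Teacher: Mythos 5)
Your proof is correct and follows essentially the same route as the paper's: the paper likewise uses Proposition \ref{DefEquiv} to parametrize the class $\mathcal{G}$ by vectors $\mathbf{x}\in\mathbb{R}^m$, exploits the periodicity of $\mathbf{x}\mapsto e^{i\langle\mathbf{r}_j,\mathbf{x}\rangle}$ (via the least common multiples $q_k$ of the denominators of the rational coordinates $r_{j,k}$, rather than by first rescaling to an integral basis) to confine the parameters to a compact box $[0,2\pi q_1)\times\cdots\times[0,2\pi q_m)$, and then extracts a componentwise convergent subsequence by Bolzano--Weierstrass, concluding exactly as you do that the limit coefficients have the form $a_{1,j}e^{i\langle\mathbf{r}_j,\mathbf{x}_0\rangle}$ and hence the limit polynomial lies in $\mathcal{G}$. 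Your factorization through the torus $\mathbb{T}^m$ and the appeal to continuity of $\Phi$ is simply a cleaner packaging of that same compactness argument.
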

\begin{proof}
Let $\Lambda=\{\lambda_1,\ldots,\lambda_n\}$ be a set of $n\geq 1$ distinct exponents and $\{P_l(t)\}_{l\geq 1}$, with $P_l(t)=a_{l,1}e^{i\lambda_1t}+\ldots+a_{l,n}e^{i\lambda_nt}$, a sequence in an equivalence class $\mathcal{G}$ of $\mathcal{P}_{\mathbb{R},\Lambda}/\shortstack{$_{{\fontsize{6}{7}\selectfont *}}$\\$\sim$}$ (the case $\mathcal{P}_{\Lambda}/\shortstack{$_{{\fontsize{6}{7}\selectfont *}}$\\$\sim$}$ is analogous). 
Fixed a basis $G_{\Lambda}=\{g_1,g_2,\ldots,g_m\}$ for $\Lambda$, let $\mathbf{r}_j=(r_{j,1},r_{j,2},\ldots,r_{j,m})$ be the vector verifying $<\mathbf{r}_j,\mathbf{g}>=\lambda_j$ for each $j=1,\ldots,n$, where $\mathbf{g}=(g_1,g_2,\ldots,g_m)$. 
Since $f_1\ \shortstack{$_{{\fontsize{6}{7}\selectfont *}}$\\$\sim$}\ f_l$ for each $l=1,2,\ldots$, we deduce from Proposition \ref{DefEquiv} that there exists  $\mathbf{x}_l=(x_{l,1},x_{l,2},\ldots,x_{l,m})\in\mathbb{R}^{m}$ such that
\begin{equation*}\label{seqaddpi}
a_{l,j}=a_{1,j}e^{i<\mathbf{r}_j,\mathbf{x}_l>}=a_{1,j}\prod_{k=1}^{m}e^{ir_{j,k}x_{l,k}},\ j=1,2\ldots,n.
\end{equation*}
Let $r_{j,k}=\frac{p_{j,k}}{q_{j,k}}$ with $p_{j,k}$ and $q_{j,k}$ coprime integer numbers, and define $q_k:=\operatorname{lcm}(q_{1,k},q_{2,k},\ldots,q_{m,k})$ for each $k=1,\ldots,m$. Then
$e^{ir_{j,k}(x_{l,k}+2\pi q_k)}=e^{ir_{j,k}x_{l,k}}$ for each $j=1,\ldots,n$ and $k=1,\ldots,m$, which means that, without loss of generality, we can take $\mathbf{x}_l=(x_{l,1},x_{l,2},\ldots,x_{l,m})\in [0,2\pi q_1)\times[0,2\pi q_2)\times\cdots\times[0,2\pi q_m)$ for any $l\geq 1$.
Hence we can suppose that the sequence $\{x_{l,1}\}_{l\geq 1}$ is bounded, which implies that there exists a subsequence $\{x_{l_{m,1},1}\}_{m\geq 1}\subset \{x_{l,1}\}_{l\geq 1}$ convergent to a point $x_{0,1}$. From the sequence $\{x_{l_{m,1},2}\}_{m\geq 1}$, which is also bounded, we can draw a subsequence $\{x_{l_{m,2},2}\}_{m\geq 1}$ convergent to a point $x_{0,2}$, and so for each $j=1,2,\ldots,n$ we get a vector $\mathbf{x}_0=(x_{0,1},x_{0,2},\ldots,x_{0,m})\in [0,2\pi q_1)\times[0,2\pi q_2)\times\cdots\times[0,2\pi q_m)$ such that the sequence of coefficients $\{a_{l,j}\}_{l\geq 1}$ contains a subsequence converging to $a_j:=a_{1,j}\prod_{k= 1}^m e^{ir_{j,k}x_{0,k}}=a_{1,j}e^{i<\mathbf{r}_j,\mathbf{x}_0>}$ for each $j=1,2\ldots,n$. Consequently, $P(t):=a_{1}e^{i\lambda_1t}+\ldots+a_{n}e^{i\lambda_nt}$ is in $\mathcal{G}$ and the result holds.
\end{proof}

\begin{remark}\label{add}
With respect to the proposition above, it is worth noting that we can assure the existence of infinitely many vectors $\mathbf{x}_0=(x_{0,1},x_{0,2},\ldots,x_{0,m})\in\mathbb{R}^m$ such that a convergent subsequence of $\{P_l(t)\}_{l\geq 1}\subset \mathcal{G}$ can be extracted so that
$a_j=a_{1,j}e^{i<\mathbf{r}_j,\mathbf{x}_0>}$ for each $j=1,2\ldots,n$, where the $a_j$'s are the Fourier coefficients of the limit function and the $a_{1,j}$'s are the Fourier coefficients of $P_1(t)$. Indeed, the vectors $\mathbf{x}_0$ can be constructed by fixing $\mathbf{x}_l$ in $ [2n_1\pi q_1,2(n_1+1)\pi q_1)\times[2n_2\pi q_2,2(n_2+1)\pi q_2)\times\cdots\times[2n_m\pi q_m,2(n_m+1)\pi q_m)$ for any $l\geq 1$, where $n_k\in\mathbb{Z}$.

In this same respect, suppose that $P_1(t)=a_1e^{i\lambda_1t}+\ldots+a_ke^{i\lambda_kt}$ and $Q_1(t)=b_1e^{i\lambda_1t}+\ldots+b_ke^{i\lambda_kt}$ are two equivalent trigonometric polynomials in $\mathcal{P}_{\mathbb{R},\Lambda_1}$, and let $m_1$ be the number of elements of any basis for $\Lambda_1=\{\lambda_1,\ldots,\lambda_k\}$. Thus there exist infinitely many vectors of the form $\mathbf{x}_0^{(1)}=(x_{0,1},x_{0,2},\ldots,x_{0,m_1})\in\mathbb{R}^{m_1}$ such that
$b_j=a_{j}e^{i<\mathbf{r}_j,\mathbf{x}_0^{(1)}>}$ for each $j=1,\ldots,k$. Now, if $P_2(t)=a_1e^{i\lambda_1t}+\ldots+a_ke^{i\lambda_kt}+a_{k+1}e^{i\lambda_{k+1}t}+\ldots+a_ne^{i\lambda_nt}$ is equivalent to $Q_2(t)=b_1e^{i\lambda_1t}+\ldots+b_ke^{i\lambda_kt}+b_{k+1}e^{i\lambda_{k+1}t}+\ldots+b_ne^{i\lambda_nt}$ and we denote as $m_2$ the number of elements of any basis for $\Lambda_2=\{\lambda_1,\ldots,\lambda_k,\lambda_{k+1},\ldots,\lambda_n\}$, it is clear that there exist infinitely many vectors $\mathbf{x}_0^{(2)}\in\mathbb{R}^{m_2}$ such that $b_j=a_{j}e^{i<\mathbf{r}_j,\mathbf{x}_0^{(2)}>}$ for each $j=1,\ldots,n$, and the first $m_1$ components of $\mathbf{x}_0^{(2)}$ are subsets of the vectors $\mathbf{x}_0^{(1)}$.
\end{remark}

\section{Main results}

In general terms, the equivalence relation of Definition \ref{DefEquiv00} can be immediately adapted to the case of the Besicovitch space $B(\mathbb{R},\mathbb{C})$ which is obtained by the completion of the trigonometric polynomials with respect to the seminorm given by
$\limsup_{l\to\infty}\left(\frac{ 1}{ 2l}\int_{-l}^{l}|f(t)|dt\right)$ (with respect to the properties of this space see for example \cite[Section 3.4]{Corduneanu}). In particular, the space of functions $B(\mathbb{R},\mathbb{C})$ contains those of the space of the almost periodic functions $AP(\mathbb{R},\mathbb{C})$ and every function in $B(\mathbb{R},\mathbb{C})$ is associated with a real exponential sum with real frequencies of the form $\sum_{j\geq 1}a_je^{i\lambda_jt}$ (which will also be called its Fourier series). In general, when we write that a function $f$ is in these spaces we do not have in mind the function $f$ itself, it does represent a whole class of equivalent functions according to the relation $f_1\simeq f_2$ if and only if $\limsup_{l\to\infty}\left(\frac{ 1}{ 2l}\int_{-l}^{l}|f(t)-g(t)|dt\right)=0.$
This set of complex functions with real variable could be generalized to the case of complex functions
$f:U\to\mathbb{C}$, with $U=\{\sigma+it\in\mathbb{C}:\alpha<\sigma<\beta\}$ a vertical strip in $\mathbb{C}$, which lead us to the functional sets $B(U,\mathbb{C})\supset AP(U,\mathbb{C})$ whose functions $f(s)$, $s=\sigma+it$, are associated with a complex exponential sum with real frequencies of the form $\sum_{j\geq 1}a_je^{\lambda_js}$ (which will also be called its Dirichlet series) which represents the Fourier series of $f(s)$ on any vertical line in $U$, with $f(\sigma_0+it):\mathbb{R}\to\mathbb{C}$ in $B(\mathbb{R},\mathbb{C})$ for every $\sigma_0\in(\alpha,\beta)$.
As before, when we write that a function $f$ is in the spaces $B(U,\mathbb{C})$ we do not have in mind the function $f$ itself, it does also represent a whole class of equivalent functions according to the relation above $\simeq$ on every vertical line in $U$ (by abuse of notation, it will also be denoted as $\simeq$ in the following definition).

In this way, we establish the following definition.

\begin{definition}[mod.]\label{DefEquiv2}
Given $\Lambda=\{\lambda_1,\lambda_2,\ldots,\lambda_j,\ldots\}$ a set of exponents, let $f_1$ and $f_2$ denote two  equivalence classes of $B(\mathbb{R},\mathbb{C})/\simeq$ (resp. $B(U,\mathbb{C})/\simeq$) whose associated Fourier series (resp. Dirichlet series) are given by
 \begin{equation*}\label{eqq}
\sum_{j\geq 1}a_je^{i\lambda_jt}\ \mbox{and}\ \sum_{j\geq 1}b_je^{i\lambda_jt},\ a_j,b_j\in\mathbb{C},\ \lambda_j\in\Lambda.\mbox{)}
\end{equation*}
 \begin{equation*}\label{eqq00}
\mbox{(resp. }\sum_{j\geq 1}a_je^{\lambda_js}\ \mbox{and}\ \sum_{j\geq 1}b_je^{\lambda_js},\ a_j,b_j\in\mathbb{C},\ \lambda_j\in\Lambda.
\end{equation*}
We will say that $f_1$ is equivalent to $f_2$ if for each integer value $n\geq1$, with $n\leq\sharp\Lambda$, it is satisfied $a_n^*\sim b_n^*$, where $a_n^*,b_n^*:\{\lambda_1,\ldots,\lambda_n\}\to\mathbb{C}$ are the functions given by $a_n^*(\lambda_j):=a_j$ and $b_n^*(\lambda_j):=b_j$, $j=1,2,\ldots,n$, and $\sim$ is in Definition \ref{DefEquiv0}.
%
%
\end{definition}
Again, by abuse of notation, we will also use $\shortstack{$_{{\fontsize{6}{7}\selectfont *}}$\\$\sim$}$ for the equivalence relation above, which can also be characterized in terms of a basis for $\Lambda$ (as in Proposition \ref{DefEquiv}). 

For our purposes, we next focus our attention on the following classes of functions included in the spaces of almost periodic functions $AP(\mathbb{R},\mathbb{C})$ and $AP(U,\mathbb{C})$. We recall that the elements of these spaces also correspond to the so-called uniformly almost periodic functions which were used in \cite{Besi}. 

\begin{definition}\label{DF}
Let $\Lambda=\{\lambda_1,\lambda_2,\ldots,\lambda_j,\ldots\}$ be an arbitrary countable set of distinct real numbers. We will say that a function $f:U\subset\mathbb{C}\to\mathbb{C}$ (resp. $f:\mathbb{R}\to\mathbb{C}$) is in the class $\mathcal{D}_{\Lambda}$ (resp. $\mathcal{F}_{\Lambda}$) if it is an almost periodic function in $AP(U,\mathbb{C})$ (resp. in $AP(\mathbb{R},\mathbb{C})$) whose associated Dirichlet series (resp. Fourier series) is of the form
 \begin{equation}\label{eqqo}
\sum_{j\geq 1}a_je^{\lambda_js},\ a_j\in\mathbb{C},\ \lambda_j\in\Lambda,
\end{equation}
 \begin{equation}\label{eqq00o}
\mbox{(resp. }\sum_{j\geq 1}a_je^{i\lambda_jt},\ a_j\in\mathbb{C},\ \lambda_j\in\Lambda.\mbox{)}
\end{equation}
where $U$ is a strip of the type $\{s\in\mathbb{C}: \alpha<\operatorname{Re}s<\beta\}$, with $-\infty\leq\alpha<\beta\leq\infty$.
\end{definition}

It is convenient to recall that any almost periodic function in $AP(U,\mathbb{C})$ (resp. in $AP(\mathbb{R},\mathbb{C})$) is uniquely determined by its Dirichlet series (resp. Fourier series), which is of type (\ref{eqqo}) (resp. of type (\ref{eqq00o})). In fact, even in the case that the sequence of the partial sums of its Dirichlet series (resp. Fourier series) does not converge uniformly, there exists a sequence of finite exponential sums, called Bochner-Fej\'{e}r polynomials, of the type $P_k(s)=\sum_{j\geq 1}p_{j,k}a_je^{\lambda_js}$ (resp. $P_k(t)=\sum_{j\geq 1}p_{j,k}a_je^{i\lambda_jt}$) where for each $k$ only a finite number of the factors $p_{j,k}$ differ from zero, which converges uniformly to $f$ in every reduced strip in $U$ (resp. in $\mathbb{R}$) and converges formally to the Dirichlet series on $U$ (resp. to the Fourier series on $\mathbb{R}$) \cite[Polynomial approximation theorem, pgs. 50,148]{Besi}.

In this way, Definition \ref{DefEquiv2} can be particularized to the classes $\mathcal{D}_{\Lambda}$ and $\mathcal{F}_{\Lambda}$ for which the equivalence classes of $\mathcal{D}_{\Lambda}/\simeq$ and $\mathcal{F}_{\Lambda}/\simeq$ are reduced to individual functions.

We next demonstrate the following important lemma which will let us to consider equivalence classes in the space of almost periodic functions.

\begin{lemma}\label{defnueva}
Let $f_1(t)\in AP(\mathbb{R},\mathbb{C})$ be an almost periodic function whose Fourier series is given by $\sum_{j\geq 1}a_je^{i\lambda_jt},\ a_j\in\mathbb{C}$, where $\{\lambda_1,\ldots,\lambda_j,\ldots\}$ is a set of distinct exponents. Consider $b_j\in\mathbb{C}$ such that $\sum_{j\geq 1}a_je^{i\lambda_jt}$ is equivalent to $\sum_{j\geq 1}b_je^{i\lambda_jt}$. Then $\sum_{j\geq 1}b_je^{i\lambda_jt}$ is the Fourier series associated with an almost periodic function $f_2(t)\in AP(\mathbb{R},\mathbb{C})$ such that $f_1\ \shortstack{$_{{\fontsize{6}{7}\selectfont *}}$\\$\sim$}\ f_2$.
\end{lemma}
\begin{proof}
Take $\Lambda=\{\lambda_1,\ldots,\lambda_j,\ldots\}$. By the hypothesis, $f_1\in\mathcal{F}_{\Lambda}\subset AP(\mathbb{R},\mathbb{C})$ is determined by the series
$\sum_{j\geq 1}a_je^{i\lambda_jt},\ a_j\in\mathbb{C},\ \lambda_j\in\Lambda.$ Moreover,
since it is accomplished $\sum_{j\geq 1}a_je^{i\lambda_jt}\shortstack{$_{{\fontsize{6}{7}\selectfont *}}$\\$\sim$} \sum_{j\geq 1}b_je^{i\lambda_jt}$, for each integer value $n\geq 1$ there exists $\mathbf{x}_n\in\mathbb{R}^{\sharp\Lambda}$ such that $b_j=a_j e^{<\mathbf{r}_j,\mathbf{x}_n>i}$ for each $j=1,\ldots,n$, where $\mathbf{r}_j$ is given by (\ref{errej}).
On the other hand, let $$P_k(t)=\sum_{j\geq 1}p_{j,k}a_je^{i\lambda_jt},\ k=1,2,\ldots,$$ with $p_{j,k}\to 1$ as $k\to\infty$, be the sequence of Bochner-Fej\'{e}r polynomials which converges uniformly on $\mathbb{R}$ to $f_1(t)$ (\cite[Polynomial approximation theorem, p. 50]{Besi}). It is known that $\{P_k(t)\}_{k\geq 1}$ is equicontinuous on $\mathbb{R}$, equi-almost periodic on $\mathbb{R}$  and it converges in mean (see \cite[Section 4.4]{Corduneanu}). 
In particular, that implies that, fixed $\varepsilon>0$, there exists $\delta>0$ such that for any $k\geq1$ we have
\begin{equation}\label{Ju}
|P_k(t_1)-P_k(t_2)|<\varepsilon/3\ \mbox{if }|t_1-t_2|<\delta.
\end{equation}
Also, there exists $l>0$ such that any interval $(a,a+l)$ of length $l$ contains a number $\tau$ satisfying
\begin{equation}\label{Lo}
|P_k(t+\tau)-P_k(t)|<\varepsilon/3\ \mbox{for any }t\in\mathbb{R}\mbox{ and }k\geq 1.
\end{equation}
Now, define the sequence of trigonometric polynomials
$$Q_k(t):=\sum_{j\geq 1}p_{j,k}b_je^{i\lambda_jt},\ k=1,2,\ldots,$$
where, fixed $k$, we can take $b_j=a_je^{<\mathbf{r}_j,\mathbf{x}_{n_k}>i}$ with $n_k$ the greatest integer value $j$ such that $p_{j,k}\neq 0$.
By Corollary \ref{corol3} there exists $\tau_1>0$ such that
\begin{equation}\label{tos}
|Q_k(t+\tau_1)-P_k(t)|<\varepsilon/3\ \mbox{for any }t\in\mathbb{R}.
\end{equation}
Consequently, we deduce from (\ref{Ju}) and (\ref{tos}) that, if $|t_1-t_2|<\delta$, we have
\begin{multline*}
$$|Q_k(t_1+\tau_1)-Q_k(t_2+\tau_1)|\leq|Q_k(t_1+\tau_1)-P_k(t_1)|+\\|P_k(t_1)-P_k(t_2)|+ |P_k(t_2)-Q_k(t_2+\tau_1)|< \varepsilon \mbox{ for each }k\geq 1.$$
\end{multline*}
Hence $\{Q_k(t)\}$ is equicontinuous.
Also, by (\ref{Lo}) and (\ref{tos}), for each $k\geq 1$ we have
\begin{multline*}
$$|Q_k(t+\tau)-Q_k(t)|=|Q_k(t+\tau)-P_k(t+\tau-\tau_1)|+\\|P_k(t+\tau-\tau_1)-P_k(t-\tau_1)|+|P_k(t-\tau_1)-Q_k(t)|<\varepsilon.$$
\end{multline*}
Hence $\{Q_k(t)\}$ is equi-almost periodic on $\mathbb{R}$.
Finally, since $\{P_k(t)\}_{k\geq 1}$ converges in mean, there exists $k_0$ such that
\begin{equation*}\label{Juan}
M\{|P_{k_1}(t)-P_{k_2}(t)|^2\}<\varepsilon\ \mbox{for all }k_1,k_2\geq k_0,
\end{equation*}
where $M\{|P_{k_1}(t)-P_{k_2}(t)|^2\}=\Sum_{j\geq 1}(p_{j,k_1}-p_{j,k_2})^2|a_j|^2$.
However, note that
$$M\{|Q_{k_1}(t)-Q_{k_2}(t)|^2\}=\Sum_{j\geq 1}(p_{j,k_1}-p_{j,k_2})^2\left|e^{<\mathbf{r}_j,\mathbf{x}_{n_{k_2}}>i}\right|^2|a_j|^2=$$
$$M\{|P_{k_1}(t)-P_{k_2}(t)|^2\},$$
where we suppose that $n_{k_2}\geq n_{k_1}$,
which implies that
$\{Q_k(t)\}_{k\geq 1}$ converges in mean.
Consequently, in virtue of \cite[p. 43]{Besi} 
(or \cite[Section 4.4]{Corduneanu}), we have that $\{Q_k(t)\}_{k\geq 1}$
converges uniformly on $\mathbb{R}$ to $f_2(t)$, say. Now,
since $AP(\mathbb{R},\mathbb{C})$ is the closure of the trigonometric polynomials in the sense of uniform convergence,
we have $f_2(t)\in AP(\mathbb{R},\mathbb{C})$ and, by \cite[p. 21]{Besi}, $\sum_{j\geq 1}b_je^{i\lambda_jt}$ represents its Fourier series. Finally, by taking into account Definition \ref{DefEquiv2} (in terms of Proposition \ref{DefEquiv}) we have $f_1\ \shortstack{$_{{\fontsize{6}{7}\selectfont *}}$\\$\sim$}\ f_2$.
\end{proof}

In the same manner, we have an analogous result for the complex case.
\begin{lemma}\label{defnueva2}
Let $f_1(s)\in AP(U,\mathbb{C})$ be an almost periodic function in a vertical strip $U$ whose Dirichlet series is given by $\sum_{j\geq 1}a_je^{\lambda_js},\ a_j\in\mathbb{C}$, where $\{\lambda_1,\ldots,\lambda_j,\ldots\}$ is a set of distinct exponents. Take  $b_j\in\mathbb{C}$ such that $\sum_{j\geq 1}a_je^{\lambda_js}$ is equivalent to $\sum_{j\geq 1}b_je^{\lambda_j s}$. Then $\sum_{j\geq 1}b_je^{\lambda_js}$ is the Dirichlet series associated with an almost periodic function $f_2(s)\in AP(U,\mathbb{C})$ such that $f_1\ \shortstack{$_{{\fontsize{6}{7}\selectfont *}}$\\$\sim$}\ f_2$.
\end{lemma}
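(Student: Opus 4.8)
The plan is to reproduce the proof of Lemma \ref{defnueva} almost verbatim, replacing the real line $\mathbb{R}$ by an arbitrary reduced strip of $U$, the Fourier series by the Dirichlet series, and Corollary \ref{corol3} by its strip-version Theorem \ref{prop3}. Put $\Lambda=\{\lambda_1,\ldots,\lambda_j,\ldots\}$, so that $f_1\in\mathcal{D}_{\Lambda}\subset AP(U,\mathbb{C})$. Since $\sum_{j\geq1} a_je^{\lambda_js}$ is equivalent to $\sum_{j\geq1} b_je^{\lambda_js}$, Proposition \ref{DefEquiv} (via Definition \ref{DefEquiv2}) supplies, for each $n\geq1$, a vector $\mathbf{x}_n\in\mathbb{R}^{\sharp G_{\Lambda}}$ with $b_j=a_je^{<\mathbf{r}_j,\mathbf{x}_n>i}$ for $j=1,\ldots,n$; in particular $|b_j|=|a_j|$ for every $j$. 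First I would introduce the Bochner--Fej\'er polynomials $P_k(s)=\sum_{j\geq1}p_{j,k}a_je^{\lambda_js}$, which converge uniformly to $f_1$ on every reduced strip and which are equicontinuous, equi-almost periodic in the $t$-direction uniformly in $\sigma$ over the strip, and convergent in mean on each vertical line. I would then set $Q_k(s)=\sum_{j\geq1}p_{j,k}b_je^{\lambda_js}$, choosing for each fixed $k$ the representation $b_j=a_je^{<\mathbf{r}_j,\mathbf{x}_{n_k}>i}$ with $n_k$ the largest index $j$ for which $p_{j,k}\neq0$, so that $P_k$ and $Q_k$ are finite exponential sums lying in the same class of $\mathcal{P}_{\Lambda}/\overset{*}{\sim}$.

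Next, fixing a reduced strip $\{\sigma_0\leq\sigma\leq\sigma_1\}$ and $\varepsilon>0$, for each $k$ Theorem \ref{prop3} produces a real $\tau_1$ (a purely vertical shift, hence one preserving the strip) with $|Q_k(s+i\tau_1)-P_k(s)|<\varepsilon/3$ throughout the strip. Exactly as in the real case, combining this estimate with the equicontinuity and the equi-almost periodicity of $\{P_k\}$ (both translation-invariant in $t$) transfers those two properties to $\{Q_k\}$ on the reduced strip by the usual three-term triangle inequalities.

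For convergence in mean I would use the crucial identity, valid on each vertical line $\sigma=\text{const}$, namely $M\{|Q_{k_1}(\sigma+it)-Q_{k_2}(\sigma+it)|^2\}=\sum_{j\geq1}(p_{j,k_1}-p_{j,k_2})^2|a_j|^2e^{2\lambda_j\sigma}=M\{|P_{k_1}(\sigma+it)-P_{k_2}(\sigma+it)|^2\}$, which holds precisely because $|e^{<\mathbf{r}_j,\mathbf{x}>i}|=1$; this shows $\{Q_k\}$ is Cauchy in mean on each line. I would then invoke the Besicovitch criterion \cite[p.43]{Besi} (equicontinuity together with equi-almost periodicity and convergence in mean force uniform convergence) to conclude that $\{Q_k\}$ converges uniformly on the reduced strip to a limit $f_2$. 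Since $AP(U,\mathbb{C})$ is the closure of the exponential polynomials in the topology of uniform convergence on reduced strips, we get $f_2\in AP(U,\mathbb{C})$, its Dirichlet series is $\sum_{j\geq1} b_je^{\lambda_js}$, and Definition \ref{DefEquiv2} (through Proposition \ref{DefEquiv}) yields $f_1\overset{*}{\sim}f_2$.

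The main obstacle I anticipate is passing, in the complex setting, from convergence in mean on the \emph{individual} vertical lines, together with strip-equicontinuity and strip-equi-almost periodicity, to genuine uniform convergence on the whole two-dimensional reduced strip, since the cited Besicovitch criterion is stated on $\mathbb{R}$. One must therefore either argue line-by-line and then upgrade to the strip, or exploit that the $Q_k$ are entire and uniformly bounded on reduced strips (the bound $|Q_k(\sigma+it)|\leq\sum_{j\geq1}|p_{j,k}||a_j|e^{\lambda_j\sigma}$ being identical to the one controlling $P_k$) in order to invoke a Vitali/normal-families argument promoting uniform convergence on a single line to uniform convergence on compact subsets of $U$. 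Checking that all the $\varepsilon/3$ estimates and the mean bounds can be made uniform in $\sigma$ across the strip, rather than merely for each fixed $\sigma$, is the delicate point of the argument.
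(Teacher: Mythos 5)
Your route is genuinely different from the paper's, and as written it stops short of a proof at exactly the point you yourself flag. The paper does not redo the Bochner--Fej\'er machinery in the strip at all: it fixes an arbitrary reduced strip $U_1=\{s\in\mathbb{C}:\sigma_1\leq\operatorname{Re}s\leq\sigma_2\}$, applies the already-proved real-variable Lemma \ref{defnueva} to the two boundary lines, obtaining almost periodic functions $f_{2,\sigma_1}(t)$ and $f_{2,\sigma_2}(t)$ whose Fourier series are $\sum_{j\geq1}b_je^{\lambda_j\sigma_1}e^{i\lambda_jt}$ and $\sum_{j\geq1}b_je^{\lambda_j\sigma_2}e^{i\lambda_jt}$, and then invokes the two-line theorem of Besicovitch \cite[p.\ 149, Theorem]{Besi}, which supplies an analytic almost periodic function $f_2\in AP(U_1,\mathbb{C})$ agreeing with these two functions on the two lines and having $\sum_{j\geq1}b_je^{\lambda_js}$ as its Dirichlet series; since $U_1$ is an arbitrary reduced strip of $U$, the conclusion follows. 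That cited theorem is precisely the tool that converts one-dimensional (line) information into two-dimensional (strip) information --- the conversion your argument needs but never actually performs.

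The concrete gap: the criterion you invoke from \cite[p.\ 43]{Besi} (equicontinuity plus equi-almost periodicity plus convergence in mean implies uniform convergence) is a statement about sequences of functions of a \emph{real} variable; no strip version is cited or proved, and your closing paragraph merely lists two candidate strategies without carrying either out. Moreover, the fallback you sketch is itself flawed as stated: the bound $|Q_k(\sigma+it)|\leq\sum_{j\geq1}|p_{j,k}||a_j|e^{\lambda_j\sigma}$ is not uniform in $k$, because the Dirichlet series of an almost periodic function need not converge absolutely, so $\sum_{j\geq1}|a_j|e^{\lambda_j\sigma}$ may diverge; uniform boundedness of $\{Q_k\}$ on a reduced strip must instead be deduced indirectly (e.g.\ from Theorem \ref{prop3}, comparing $Q_k$ with the uniformly bounded $P_k$ after a vertical shift). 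Even granting normality, Vitali/Montel gives uniform convergence only on compact subsets, while reduced strips are unbounded in $t$, so one would still need the equi-almost periodicity to upgrade compact convergence to convergence on the whole strip. All of this can be repaired --- apply the one-dimensional criterion on a single line, then use boundedness and equi-almost periodicity, or simply pass to two lines and quote \cite[p.\ 149, Theorem]{Besi} as the paper does --- but in your text the decisive step is announced as ``the delicate point'' rather than proved, so the proposal is incomplete.
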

\begin{proof}
Let $U=\{s\in\mathbb{C}:\alpha<\operatorname{Re}s<\beta\}$. Take $\alpha<\sigma_1<\sigma_2<\beta$ and $U_1:=\{s\in\mathbb{C}:\sigma_1\leq\operatorname{Re}s\leq\sigma_2\}$. By Lemma \ref{defnueva}, note that $\sum_{j\geq 1}b_je^{\lambda_j\sigma}e^{i\lambda_jt}$ represents for $\sigma=\sigma_1$ and $\sigma=\sigma_2$ the Fourier series of two almost periodic functions $f_{2,\sigma_1}(t)$ and $f_{2,\sigma_2}(t)$ which are equivalent to $f_{1,\sigma_1}(t):=f_1(\sigma_1+it)$ and $f_{1,\sigma_2}(t):=f_1(\sigma_2+it)$, respectively. Now, by \cite[p. 149, Theorem]{Besi}, there exists a function $f_2(s)\in AP(U_1,\mathbb{C})$ such that $f_2(\sigma_1+it)=f_{2,\sigma_1}(t)$, $f_2(\sigma_2+it)=f_{2,\sigma_2}(t)$ and $\sum_{j\geq 1}b_je^{\lambda_js}$ is the Dirichlet series associated with it. As $U_1$ is an arbitrary reduced strip in $U$, the result holds. Finally, by taking into account Definition \ref{DefEquiv2}, we have $f_1\ \shortstack{$_{{\fontsize{6}{7}\selectfont *}}$\\$\sim$}\ f_2$.
\end{proof}

Since two almost periodic functions are equal if and only if they have the same Dirichlet or Fourier series (\cite[p. 148]{Besi} and \cite[Section 4.2]{Corduneanu}), we deduce from the results above that if two functions in $B(\mathbb{R},\mathbb{C})$ or $B(U,\mathbb{C})$ are equivalent and we know that one of them is in $AP(\mathbb{R},\mathbb{C})$ or $AP(U,\mathbb{C})$, then both functions are in $AP(\mathbb{R},\mathbb{C})$ or $AP(U,\mathbb{C})$, respectively.

\subsection{On the space $AP(\mathbb{R},\mathbb{C})$}

\ \\[0.2cm]
From this section, the set of functions of the Besicovitch space $B(\mathbb{R},\mathbb{C})\supset AP(\mathbb{R},\mathbb{C})$ will be taken as the set of reference in the sense that each function in $B(\mathbb{R},\mathbb{C})$ is associated with a Fourier series \cite[Section 4.2]{Corduneanu}. We will consider that this space is endowed with the topology of uniform convergence on $\mathbb{R}$. Under this topology, we next show that the equivalence classes of $\mathcal{F}_{\Lambda}/\shortstack{$_{{\fontsize{6}{7}\selectfont *}}$\\$\sim$}$ are closed. In fact, more specifically, they are sequentially compact. In this respect, it is worth noting that, by Lemma \ref{defnueva}, if $f\in \mathcal{F}_{\Lambda}$, then any function of its equivalence class is also included in $\mathcal{F}_{\Lambda}$.

\begin{proposition}\label{prop}
Let $\Lambda$ be a set of exponents and $\mathcal{G}$ an equivalence class in $\mathcal{F}_{\Lambda}/\shortstack{$_{{\fontsize{6}{7}\selectfont *}}$\\$\sim$}$. Then $\mathcal{G}$ is sequentially compact.
\end{proposition}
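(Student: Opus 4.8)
The plan is to combine the finite-dimensional compactness argument of Proposition \ref{proppp} with the approximation machinery of Lemma \ref{defnueva}. Fix a sequence $\{f_l\}_{l\geq 1}\subset\mathcal{G}$ and take $f_1$ as a reference, writing the Fourier series of $f_l$ as $\sum_{j\geq 1}a_{l,j}e^{i\lambda_jt}$. Since every $f_l$ is equivalent to $f_1$, Proposition \ref{DefEquiv} (applied through Definition \ref{DefEquiv2}) shows that each coefficient $a_{l,j}$ is a unit-modulus rotation of $a_{1,j}$, so $|a_{l,j}|=|a_{1,j}|$ for all $l,j$; in particular every sequence $\{a_{l,j}\}_{l\geq 1}$ lies on a fixed circle and is bounded. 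A diagonal extraction over $j$ then produces a subsequence $\{f_{l_m}\}$ for which $a_{l_m,j}\to a_j$ as $m\to\infty$, for each $j\geq 1$.

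The second step is to verify that the limiting coefficients define a series in the class $\mathcal{G}$. Fix $n\geq 1$. Only finitely many basis coordinates intervene in the exponents $\lambda_1,\ldots,\lambda_n$, so the vectors $\mathbf{x}_{m,n}$ furnished by Proposition \ref{DefEquiv} realising the equivalence of the $n$-truncations of $f_{l_m}$ and $f_1$ may, exactly as in Proposition \ref{proppp}, be confined to a compact box in those coordinates. Along a further subsequence $\mathbf{x}_{m,n}\to\mathbf{x}_n$, whence $a_j=a_{1,j}e^{<\mathbf{r}_j,\mathbf{x}_n>i}$ for $j=1,\ldots,n$. Since $n$ is arbitrary and Definition \ref{DefEquiv2} requires only one vector per truncation (with no compatibility demanded between different $n$), the series $\sum_{j\geq 1}a_je^{i\lambda_jt}$ is equivalent to the Fourier series of $f_1$. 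By Lemma \ref{defnueva} it is therefore the Fourier series of a function $f\in AP(\mathbb{R},\mathbb{C})$ equivalent to $f_1$, so $f\in\mathcal{G}$.

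It remains to upgrade coefficientwise convergence to uniform convergence of $\{f_{l_m}\}$ to $f$, which I expect to be the main obstacle. Convergence in mean is immediate: by Parseval's equality $M\{|f_{l_m}-f|^2\}=\sum_{j\geq 1}|a_{l_m,j}-a_j|^2$, and since each summand is dominated by $4|a_{1,j}|^2$ with $\sum_{j\geq 1}|a_{1,j}|^2=M\{|f_1|^2\}<\infty$, dominated convergence gives $M\{|f_{l_m}-f|^2\}\to 0$. The delicate point is the equicontinuity and equi-almost-periodicity of $\{f_{l_m}\}$. I would establish these just as in Lemma \ref{defnueva}: attach to each $f_{l_m}$ its Bochner--Fej\'er polynomials $P_{l_m,k}(t)=\sum_{j\geq 1}p_{j,k}a_{l_m,j}e^{i\lambda_jt}$, where the multipliers $p_{j,k}$ depend only on $\Lambda$; recall that the family $\{P_k\}$ associated with $f_1$ is equicontinuous and equi-almost-periodic, and transfer both properties to every $P_{l_m,k}$ by Corollary \ref{corol3}, using that $P_{l_m,k}$ and $P_k$ share the same frequencies and moduli. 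Since $P_{l_m,k}\to f_{l_m}$ uniformly as $k\to\infty$, the family $\{f_{l_m}\}$ inherits equicontinuity and equi-almost-periodicity.

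Finally, a sequence that is equicontinuous, equi-almost-periodic and convergent in mean converges uniformly (\cite[p. 43]{Besi}); hence $f_{l_m}$ converges uniformly on $\mathbb{R}$ to some $g\in AP(\mathbb{R},\mathbb{C})$. As uniform convergence preserves Fourier coefficients, $g$ has coefficients $a_j$, so $g=f\in\mathcal{G}$. Thus every sequence in $\mathcal{G}$ has a subsequence converging uniformly to an element of $\mathcal{G}$, which is exactly sequential compactness. The only genuine difficulty is the transfer of equicontinuity and equi-almost-periodicity to the whole family, which rests on the modulus-preserving nature of the equivalence relation together with Corollary \ref{corol3}.
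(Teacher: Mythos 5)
Your proposal is correct, and although it shares the paper's skeleton (Bochner--Fej\'{e}r polynomials, Proposition \ref{DefEquiv}, Lemma \ref{defnueva}), it executes the two key steps differently. The paper extracts convergence at the level of the Bochner--Fej\'{e}r polynomials: for each $k$ it applies the finite-dimensional Proposition \ref{proppp} to $\{P_{l,k}\}_{l\geq 1}$, nests the resulting subsequences, obtains limit polynomials $P_k$ converging formally to a limit series, invokes Lemma \ref{defnueva} to realize that series as an equivalent almost periodic function $h$, and then proves uniform convergence of the diagonal subsequence $\{f_{l_{k,k}}\}$ by Helly's selection principle together with the triangle inequality $|f_{l_{k,k}}(t)-h(t)|\leq|f_{l_{k,k}}(t)-P_{l_{k,k},k}(t)|+|P_{l_{k,k},k}(t)-h(t)|$. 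You instead diagonalize directly on the coefficients (possible because equivalence preserves moduli, so each $\{a_{l,j}\}_{l\geq 1}$ is bounded), identify the limit series as equivalent through the compact-box argument applied to each truncation (correctly exploiting that the revised Definition \ref{DefEquiv00} demands one vector per truncation with no compatibility across different $n$), and then upgrade to uniform convergence by verifying, for the sequence $\{f_{l_m}\}$ itself, the three hypotheses of the criterion from \cite[p. 43]{Besi}: mean convergence via Parseval plus dominated convergence (again exploiting modulus preservation), and equicontinuity and equi-almost-periodicity by re-running the transfer argument from the proof of Lemma \ref{defnueva} on the doubly indexed family $\{P_{l_m,k}\}$. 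This buys something real: the paper's final triangle inequality needs the approximation error $\sup_t|f_{l_{k,k}}(t)-P_{l_{k,k},k}(t)|$ to be small uniformly along the diagonal, a uniformity it secures somewhat laboriously through Helly's principle, whereas your route obtains uniform convergence from a criterion the paper already uses inside Lemma \ref{defnueva}, with the mean convergence essentially free.

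Two points to tighten. First, Corollary \ref{corol3} applies to the pair $(P_{l_m,k},P_{1,k})$ because these polynomials are \emph{equivalent}, which holds since the multipliers $p_{j,k}$ are the same for all members of $\mathcal{G}$ and equivalence is preserved under multiplying coefficients by fixed constants (Remark \ref{unodrx}); sharing ``the same frequencies and moduli'' is necessary but not by itself sufficient, so the justification should be stated as equivalence. Second, the criterion in \cite[p. 43]{Besi} is invoked in the paper for sequences of trigonometric polynomials; you should note that its proof applies verbatim to any equicontinuous, equi-almost-periodic sequence of almost periodic functions that converges in mean, which is the form in which you use it.
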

\begin{proof}
Let $\{f_l\}_{l\geq 1}$ be a sequence in an equivalence class $\mathcal{G}$ in $\mathcal{F}_{\Lambda}/\shortstack{$_{{\fontsize{6}{7}\selectfont *}}$\\$\sim$}$.
For each $l=1,2,\ldots$, suppose that the Fourier series which is associated with the almost periodic function $f_l(t)$ is given by
$$\sum_{j\geq 1}a_{l,j}e^{i\lambda_jt}\ \mbox{with }a_{l,j}\in\mathbb{C}\setminus\{0\},\ \lambda_j\in\Lambda.$$
Fixed a basis $G_{\Lambda}=\{g_1,g_2,\ldots,g_k,\ldots\}$ for $\Lambda$, let $\mathbf{r}_j=(r_{j,1},r_{j,2},\ldots)$ be the vector verifying $<\mathbf{r}_j,\mathbf{g}>=\lambda_j$ for each $j\geq 1$, where $\mathbf{g}=(g_1,g_2,\ldots,g_k,\ldots)$. 
Since $f_1\ \shortstack{$_{{\fontsize{6}{7}\selectfont *}}$\\$\sim$}\ f_l$ for each $l=1,2,\ldots$, we deduce from Proposition \ref{DefEquiv} that for each integer value $n\geq 1$ there exists  $\mathbf{x}_{l,n}=(x_{l,n,1},x_{l,n,2},\ldots)\in\mathbb{R}^{\sharp G_\Lambda}$ such that
\begin{equation}\label{seqaddpipipi1}
a_{l,j}=a_{1,j}e^{i<\mathbf{r}_j,\mathbf{x}_{l,n}>},\ j=1,2\ldots,n\ \mbox{with }\lambda_j\in\Lambda.
\end{equation}
Given $l\geq 1$, let $P_{l,k}(t)=\sum_{j\geq 1}p_{j,k}a_{l,j}e^{i\lambda_jt}$, $k=1,2,\ldots$, be the Bochner-Fej\'{e}r polynomials which converge uniformly on $\mathbb{R}$ to $f_l$ (and converge formally to its Fourier series on $\mathbb{R}$). It is worth noting that for each $k$ only a finite number of the factors $p_{j,k}$ differ from zero, and these factors $p_{j,k}$ do not depend on $l$ \cite[p. 48]{Besi}. Thus, by taking into account (\ref{seqaddpipipi1}), it is clear that $\{P_{l,1}(t)\}_{l\geq 1}$ is a sequence of equivalent trigonometric polynomials and, by Proposition \ref{proppp},
there exists a subsequence $\{P_{l_{m,1},1}(t)\}_{m\geq 1}\subset \{P_{l,1}(t)\}_{l\geq 1}$ convergent to a certain $P_1(t)=\sum_{j\geq 1}p_{j,1}a_{j}e^{i\lambda_jt}\in \mathcal{F}_{\Lambda}$ which is in the same equivalence class as $P_{1,1}(t)$ (see also Remark \ref{unodrx}). Furthermore, by Proposition \ref{DefEquiv} and Remark \ref{add}, this means that there exist infinitely many vectors $\mathbf{x}_0^{(1)}=(x_{0,1}^{(1)},x_{0,2}^{(1)},\ldots)\in\mathbb{R}^{\sharp G_\Lambda}$ such that
\begin{equation*}
p_{j,1}a_{j}=p_{j,1}a_{1,j}e^{i<\mathbf{r}_j,\mathbf{x}_0^{(1)}>},\ j=1,2\ldots,\ \mbox{with }\lambda_j\in\Lambda.
\end{equation*}
Analogously, from the sequence $\{P_{l_{m,1},2}(t)\}_{m\geq 1}$, we can draw a subsequence\\ $\{P_{l_{m,2},2}(t)\}_{m\geq 1}\subset \{P_{l_{m,1},2}(t)\}_{m\geq 1}$ convergent to a certain $$P_2(t)=\sum_{j\geq 1}p_{j,2}a_{j}e^{i\lambda_jt}\in \mathcal{F}_{\Lambda}$$ 
which is in the same equivalence class as $P_{1,2}(t)$. This implies that there exist infinitely many vectors $\mathbf{x}_0^{(2)}=(x_{0,1}^{(2)},x_{0,2}^{(2)},\ldots)\in\mathbb{R}^{\sharp G_\Lambda}$ such that
\begin{equation*}
p_{j,2}a_{j}=p_{j,2}a_{1,j}e^{i<\mathbf{r}_j,\mathbf{x}_0^{(2)}>},\ j=1,2\ldots,\ \mbox{with } \lambda_j\in\Lambda.
\end{equation*}
In general, for each $k=2,3,\ldots$, we can extract a subsequence $\{P_{l_{m,k},k}(t)\}_{m\geq 1}$ $\subset \{P_{l_{m,k-1},k}(t)\}_{m\geq 1}$ convergent to a certain $$P_k(t)=\sum_{j\geq 1}p_{j,k}a_{j}e^{i\lambda_jt}\in \mathcal{F}_{\Lambda},$$ 
which is in the same equivalence class as $P_{1,k}(t)$ and hence  there exist infinitely many vectors $\mathbf{x}_0^{(k)}=(x_{0,1}^{(k)},x_{0,2}^{(k)},\ldots)\in\mathbb{R}^{\sharp G_\Lambda}$ 
such that
\begin{equation}\label{seqaddpipipipi2}
p_{j,k}a_{j}=p_{j,k}a_{1,j}e^{i<\mathbf{r}_j,\mathbf{x}_0^{(k)}>},\ j=1,2\ldots,\ \mbox{with } \lambda_j\in\Lambda.
\end{equation}
So we get by induction a sequence $\{P_k(t)\}_{k\geq 1}$ of trigonometric polynomials which converges formally to the series
\begin{equation}\label{bo}
\sum_{j\geq 1}a_{j}e^{i\lambda_jt},\ \mbox{with }\lambda_j\in\Lambda,
\end{equation}
and, since (\ref{seqaddpipipipi2}) is satisfied for any $k=1,2,\ldots$, we can  construct, for each integer value $n\geq 1$, a vector $\mathbf{x}_{0,n}\in\mathbb{R}^{\sharp G_{\Lambda}}$ such that, by taking into account remarks \ref{unodrx} and \ref{add}, verifies
\begin{equation*}
a_{j}=a_{1,j}e^{i<\mathbf{r}_j,\mathbf{x}_{0,n}>},\ j=1,2\ldots,n\mbox{ with }\lambda_j\in\Lambda.
\end{equation*}
Hence the series (\ref{bo}) is equivalent to $\sum_{j\geq 1}a_{1,j}e^{i\lambda_jt}$ and, by Lemma \ref{defnueva}, it is the Fourier series associated with an almost periodic function $h(t)\in AP(\mathbb{R},\mathbb{C})$ such that $h\ \shortstack{$_{{\fontsize{6}{7}\selectfont *}}$\\$\sim$}\ f_1$. Consequently, $\{P_k(t)\}_{k\geq 1}$ converges
uniformly on $\mathbb{R}$ to $h(t)\in\mathcal{G}$ and we can extract a subsequence of $\{f_l(t)\}_{l\geq1}$ which also converges uniformly on $\mathbb{R}$ to $h(t)$. 
Indeed, take the functions $P_{l_{k,k},k}(t)$, which can be written as
$$
P_{l_{k,k},k}(t)=\sum_{j\geq 1}p_{j,k}a_{l_{k,k},j}e^{i\lambda_jt}=\sum_{j\geq 1}p_{j,k}|a_{1,j}|e^{i\alpha_{l_{k,k},j}}e^{i\lambda_jt},
$$
with $0\leq \alpha_{l_{k,k},j}\leq 2\pi$ for each $k$ and $j$.
By Helly's selection principle \cite[p. 179]{Apostol}, there exists a subsequence $\{k_m\}_{m\geq 1}\subset \{k\}_{k\geq 1}$ and a sequence of real numbers $\alpha_j$ such that $\lim_{k\to\infty}\alpha_{l_{k,k},j}=\alpha_j$ for every $j\geq 1$. Analogously, given the positive numbers $\{p_{j,k_m}\}_{m\geq 1}$ for values of $m$ sufficiently large, which are uniformly bounded (because they tend to $1$), there exists a subsequence of $\{k_m\}_{m\geq 1}$ (by abuse of language, we also denote it as $\{k_m\}_{m\geq 1}$) and a sequence of real numbers $p_j$ such that $\lim_{k\to\infty}p_{j,k_m}=p_j$ for every $j\geq 1$. Now, we use these sequences to form a new Fourier series
$\sum_{j\geq 1}p_{j}|a_{1,j}|e^{i\alpha_j}e^{i\lambda_jt}$
such that $\{P_{l_{k,k},k}(t)\}_{k\geq 1}$ converges uniformly on $\mathbb{R}$ to it. By uniqueness of the Fourier series, this means that it is the Fourier series of $h(t)$.
Finally, given $\varepsilon>0$, we can assure the existence of $k\in\mathbb{N}$ sufficiently large so that
$$|f_{l_{k,k}}(t)-h(t)|\leq |f_{l_{k,k}}(t)-P_{l_{k,k},k}(t)|+|P_{l_{k,k},k}(t)-h(t)|<\varepsilon$$
for any $t\in\mathbb{R}$, which proves that $\{f_{l_{k,k}}(t)\}$ converges uniformly on $\mathbb{R}$ to $h(t)$.
\end{proof}

As a consequence, in the topology of
uniform convergence on $\mathbb{R}$, we next prove that the family of translates of a function $f\in \mathcal{F}_{\Lambda}$ is closed on its equivalence class in $\mathcal{F}_{\Lambda}/\shortstack{$_{{\fontsize{6}{7}\selectfont *}}$\\$\sim$}$.
\begin{corollary}\label{cor5}
Let $\Lambda$ be a set of exponents and $f\in \mathcal{F}_{\Lambda}$. Then the limit points of the set of functions $\mathcal{T}_f=\{f_{\tau}(t):=f(t+\tau):\tau\in\mathbb{R}\}$ are functions which are equivalent to $f$.
\end{corollary}
\begin{proof}
The result follows easily from Lemma \ref{lem} and Proposition \ref{prop}.
\end{proof}

Now Corollary \ref{cor5} can be improved with the following result.
Indeed, we next prove that, fixed a function $f\in \mathcal{F}_{\Lambda}$, the limit points of the set of the translates $\mathcal{T}_f=\{f(t+\tau):\tau\in\mathbb{R}\}$ of $f$ are precisely the almost periodic functions which are equivalent to $f$.

\begin{theorem}\label{mth0}
Let $\Lambda$ be a set of exponents, $\mathcal{G}$ an equivalence class in $\mathcal{F}_{\Lambda}/\shortstack{$_{{\fontsize{6}{7}\selectfont *}}$\\$\sim$}$ and $f\in \mathcal{G}$. Then the set of functions $\mathcal{T}_f=\{f_{\tau}(t):=f(t+\tau):\tau\in\mathbb{R}\}$ is dense in $\mathcal{G}$.
\end{theorem}
\begin{proof}
Let $f(t)$ be a function in the class $\mathcal{F}_{\Lambda}$.
We know by Corollary \ref{cor5} that the limit points of the set of functions $\mathcal{T}_f=\{f_{\tau}(t):=f(t+\tau):\tau\in\mathbb{R}\}$ are almost periodic functions which are equivalent to $f$. We next demonstrate that
any function $h(t)$ which is equivalent to
$f(t)$ is also a limit point of $\mathcal{T}_f$.
If $\sharp\Lambda<\infty$, given $\varepsilon_n=\frac{1}{n}$, $n\in\mathbb{N}$, Corollary \ref{corol3} assures the existence of an increasing sequence $\{\tau_n\}_{n\geq 1}$ of positive real numbers such that any $n\in\mathbb{N}$ verifies
$$|f(t+\tau_n)-h(t)|<\varepsilon_n\ \forall t\in\mathbb{R}.$$
Hence the result holds for the case $\sharp\Lambda<\infty$.
Let $\sharp\Lambda=\infty$ and $\{P_n(t)\}_{n\geq 1}$, $\{Q_n(t)\}_{n\geq 1}$ the sequences of Bochner-Fej\'{e}r polynomials which converge uniformly on $\mathbb{R}$ to $f(t)$ and $h(t)$, respectively.
Take $$\varepsilon_1=\max\{\sup_{t\in \mathbb{R}}|f(t)-P_1(t)|,\sup_{t\in \mathbb{R}}|h(t)-Q_1(t)|\}>0,$$ then Corollary \ref{corol3} assures the existence of $\tau_1>0$ such that
$\left|P_1(t+\tau_1)-Q_1(t)\right|<\varepsilon_1,\ t\in\mathbb{R}.$
Therefore, if $t\in \mathbb{R}$ then
$$|f(t+\tau_1)-h(t)|\leq|f(t+\tau_1)-P_1(t+\tau_1)|+|P_1(t+\tau_1)-Q_1(t)|+|Q_1(t)-h(t)|\leq 3\varepsilon_1.$$ 
Similarly, take $\varepsilon_2=\max\{\sup_{t\in \mathbb{R}}|f(t)-P_2(t)|,\sup_{t\in \mathbb{R}}|h(t)-Q_2(t)|\}>0$, then Corollary \ref{corol3} assures the existence of $\tau_2>\tau_1$ such that
$\left|P_2(t+\tau_2)-Q_2(t)\right|<\varepsilon_2,\ t\in\mathbb{R}.$
Therefore, if $t\in \mathbb{R}$ then
$$|f(t+\tau_2)-h(t)|\leq|f(t+\tau_2)-P_2(t+\tau_2)|+|P_2(t+\tau_2)-Q_2(t)|+|Q_2(t)-h(t)|\leq 3\varepsilon_2.$$
In general, by repeating this process, we can construct an increasing sequence $\{\tau_n\}_{n\geq 1}$ such that each $\tau_n$ satisfies that
\begin{equation}\label{apropn111}
\left|P_n(t+\tau_n)-Q_n(t)\right|<\varepsilon_n,
\end{equation}
with $\varepsilon_n=\max\{\sup_{t\in \mathbb{R}}|f(t)-P_n(t)|,\sup_{t\in \mathbb{R}}|h(t)-Q_n(t)|\}$. Thus, from (\ref{apropn111}), any $t\in \mathbb{R}$ verifies
\begin{multline*}
$$|f(t+\tau_n)-h(t)|\leq|f(t+\tau_n)-P_n(t+\tau_n)|+ \\
+|P_n(t+\tau_n)-Q_n(t)|+|Q_n(t)-h(t)|\leq 3\varepsilon_n.$$
\end{multline*}
Note that $\varepsilon_n$ tends to $0$ when $n$ goes to $\infty$. Consequently, the sequence of functions
$\{f(t+\tau_n)\}_{n\geq 1}$ converges to $h(t)$ uniformly on $\mathbb{R}$ and the result holds.
\end{proof}

\begin{corollary}\label{cmth}
Let $f\in AP(\mathbb{R},\mathbb{C})$ and $f_1\ \shortstack{$_{{\fontsize{6}{7}\selectfont *}}$\\$\sim$}\ f$.
There exists
an increasing\-
unbounded sequence $\{\tau_n\}_{n\geq 1}$
of positive numbers
such that the sequence of functions
$\{f(t+\tau_n)\}_{n\geq 1}$ converges uniformly on $\mathbb{R}$ to $f_1(t)$.
In fact, given $\varepsilon>0$ there exists a relatively dense set of positive numbers $\tau$ such that
$$|f(t+\tau)-f_1(t)|<\varepsilon,\ \forall t\in\mathbb{R}.$$
\end{corollary}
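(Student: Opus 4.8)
The plan is to prove first the stronger second assertion---for each $\varepsilon>0$ the existence of a relatively dense set of positive $\tau$ with $\sup_{t}|f(t+\tau)-f_1(t)|<\varepsilon$---and then to extract the increasing unbounded sequence from it. Let $\Lambda$ be the set of frequencies occurring in the Fourier series of $f$, so that $f\in\mathcal{F}_{\Lambda}$. Since $f_1$ is equivalent to $f$, Lemma \ref{defnueva} guarantees that $f_1\in AP(\mathbb{R},\mathbb{C})$ and, because the equivalence relation leaves the moduli of the Fourier coefficients (hence the frequency support) unchanged, $f_1$ lies in the same equivalence class $\mathcal{G}$ as $f$.

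Fix $\varepsilon>0$. By Theorem \ref{mth0} the set $\mathcal{T}_f=\{f(t+\tau):\tau\in\mathbb{R}\}$ is dense in $\mathcal{G}$, and $f_1\in\mathcal{G}$; hence there is a single $\tau_1\in\mathbb{R}$ with $\sup_{t\in\mathbb{R}}|f(t+\tau_1)-f_1(t)|<\varepsilon/2$. I would then exploit the almost periodicity of $f$ itself: the $\varepsilon/2$-translation numbers $\tau'$ of $f$, i.e. the numbers with $|f(t+\tau')-f(t)|\le\varepsilon/2$ for all $t$, form a relatively dense set. For any such $\tau'$, putting $u=t+\tau_1$ and applying the triangle inequality,
$$|f(t+\tau'+\tau_1)-f_1(t)|\le|f(u+\tau')-f(u)|+|f(t+\tau_1)-f_1(t)|<\varepsilon.$$
Thus every number $\tau'+\tau_1$ has the required approximation property, and since the $\tau'$ are relatively dense their translate $\{\tau'+\tau_1\}$ is relatively dense as well; choosing $\tau'$ large keeps $\tau'+\tau_1>0$, so the positive numbers among them still form a relatively dense set. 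This establishes the second assertion.

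The first assertion then follows at once: applying the above with $\varepsilon=\tfrac1n$ yields, for each $n$, a relatively dense (hence unbounded above) set of positive $\tau$ for which $\sup_{t}|f(t+\tau)-f_1(t)|<\tfrac1n$. Selecting $\tau_n$ from this set with $\tau_n>\tau_{n-1}$ produces an increasing, unbounded sequence $\{\tau_n\}_{n\ge1}$ of positive numbers with $\sup_{t}|f(t+\tau_n)-f_1(t)|<\tfrac1n\to0$, that is, $f(t+\tau_n)\to f_1(t)$ uniformly on $\mathbb{R}$.

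The one genuinely non-trivial point is that, unlike the finite-frequency situation handled directly by Corollary \ref{corol3}, here $f$ and $f_1$ may carry infinitely many frequencies, so one cannot manufacture the relatively dense set in a single stroke via Kronecker's theorem. The device---borrowed from the proof of Theorem \ref{prop3}---is to separate the two roles: Theorem \ref{mth0} supplies one good shift $\tau_1$ that approximates $f_1$, and the almost periodicity of $f$ then spreads this single shift into a relatively dense family by composing it with the $\varepsilon/2$-translation numbers of $f$. The only remaining care is to retain positivity of the shifts, which is automatic since a relatively dense set is unbounded above.
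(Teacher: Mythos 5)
Your proposal is correct and follows essentially the same route as the paper: both arguments combine Theorem \ref{mth0} with the relative density of the $\varepsilon/2$-translation numbers of $f$ itself, spreading one good shift into a relatively dense family via the triangle inequality. The only (immaterial) difference is the order — the paper extracts the increasing unbounded sequence $\{\delta_n\}$ first from the proof of Theorem \ref{mth0} and then obtains the relatively dense set as $\{\delta_n+\tau\}$, whereas you prove the relatively dense assertion first from a single shift and then select the sequence from it (taking care, e.g. $\tau_n>\max(\tau_{n-1},n)$, to guarantee unboundedness).
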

\begin{proof}
Let $f$ be an almost periodic in $AP(\mathbb{R},\mathbb{C})$, then $f\in \mathcal{F}_{\Lambda}$ for some set $\Lambda$ of exponents. Let $\mathcal{G}$ be the equivalence class in $\mathcal{F}_{\Lambda}/\shortstack{$_{{\fontsize{6}{7}\selectfont *}}$\\$\sim$}$ such that $f\in\mathcal{G}$ and let $f_1\ \shortstack{$_{{\fontsize{6}{7}\selectfont *}}$\\$\sim$}\ f$. Thus, by Theorem \ref{mth0} (see also its proof), there exists an increasing unbounded sequence $\{\delta_n\}_{n\geq 1}$
of positive numbers such that the sequence of functions
$\{f(t+\delta_n)\}_{n\geq 1}$ converges uniformly to $f_1(t)$ on
$\mathbb{R}$.
Equivalently, given $\varepsilon>0$ there exists $n_0\in\mathbb{N}$ such that
$$|f(t+\delta_n)-f_1(t)|<\varepsilon/2\ \ \forall n\geq n_0,\ \forall t\in\mathbb{R}.$$
Moreover, since $f(t)$ is almost periodic, there exists $l=l(\varepsilon)>0$ such that any interval $(a,a+l)$ contains a number $\tau$ satisfying $|f(t+\tau)-f(t)|<\varepsilon/2$ $\forall t\in\mathbb{R}$. Hence any interval $(a,a+l)$ contains a number $\tau$ satisfying
\begin{multline*}
|f(t+\delta_n+\tau)-f_1(t)|\leq |f(t+\delta_n+\tau)-f(t+\delta_n)|+\\+|f(t+\delta_n)-f_1(t)|<\varepsilon\ \forall n\geq n_0,\ \forall t\in\mathbb{R},
\end{multline*}
which proves the result.
\end{proof}

As it was said in the introduction, Bochner's property consisting of the relative compactness of the set $\{f(t+\tau)\}$, $\tau\in\mathbb{R}$, associated with a function $f$, is a characteristic feature of
almost periodicity in the sense of Bohr. In this respect, the following main theorem, formulated in terms of Bochner's result, characterizes the space of almost periodic functions $AP(\mathbb{R},\mathbb{C})$.

\begin{theorem}\label{mthh}
 Let $f\in B(\mathbb{R},\mathbb{C})$. Then $f$ is in $AP(\mathbb{R},\mathbb{C})$ if and only if the closure of its set of translates is compact and it coincides with its equivalence class.
\end{theorem}
\begin{proof}
First of all, we recall that any function $f\in B(\mathbb{R},\mathbb{C})$ has an associated Fourier series.
Let $f\in AP(\mathbb{R},\mathbb{C})$, then $f\in \mathcal{F}_{\Lambda}$ for some set $\Lambda$ of exponents. Now, let $\mathcal{G}$ be the equivalence class in $\mathcal{F}_{\Lambda}/\shortstack{$_{{\fontsize{6}{7}\selectfont *}}$\\$\sim$}$ such that $f\in\mathcal{G}$.
By Theorem \ref{mth0}, all the limit points of the translates of $f$ are functions which are included in $\mathcal{G}$ and, in fact, the compact closure of the set of the translates of $f$ coincides with $\mathcal{G}$. Conversely, since the set of translates of a function $f$ is relatively compact, we have that $f\in AP(\mathbb{R},\mathbb{C})$ in virtue of Bochner's result.
\end{proof}

Equivalently, the theorem above shows that, for a $f\in B(\mathbb{R},\mathbb{C})$, the condition of
almost periodicity in the sense of Bohr is equivalent to that every sequence $\{f(t+\tau_n)\}$, $\tau_n\in\mathbb{R}$, of translates of $f$ has a subsequence that converges uniformly on $\mathbb{R}$ to a function which is equivalent to $f$.

\begin{corollary}
If $f\in AP(\mathbb{R},\mathbb{C})$, then the compact closure of its set of translates coincides with its equivalence class.
\end{corollary}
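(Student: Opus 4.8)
The plan is to obtain this corollary directly from the forward implication of Theorem~\ref{mthh}, since all the substantive work has already been carried out. First I would observe that $f\in AP(\mathbb{R},\mathbb{C})$ forces $f\in\mathcal{F}_{\Lambda}$ for some set $\Lambda$ of exponents; writing $\mathcal{G}$ for the equivalence class containing $f$, the statement amounts to showing that the closure $\overline{\mathcal{T}_f}$ of the set of translates $\mathcal{T}_f=\{f(t+\tau):\tau\in\mathbb{R}\}$, taken in the topology of uniform convergence on $\mathbb{R}$, equals $\mathcal{G}$ and is compact.

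Next I would assemble the inclusion, the density, and the closedness. By Lemma~\ref{lem} (equivalently, by Corollary~\ref{cor5}) every translate $f_\tau$ is equivalent to $f$, so $\mathcal{T}_f\subseteq\mathcal{G}$; by Theorem~\ref{mth0} the set $\mathcal{T}_f$ is dense in $\mathcal{G}$; and by Proposition~\ref{prop} the class $\mathcal{G}$ is sequentially compact. Since we are working in a metric space (the uniform metric on $\mathbb{R}$), sequential compactness yields both closedness and compactness of $\mathcal{G}$. Combining these, $\mathcal{G}=\overline{\mathcal{G}}\supseteq\overline{\mathcal{T}_f}\supseteq\mathcal{G}$, where the middle inclusion uses $\mathcal{T}_f\subseteq\mathcal{G}$ and the last uses density; hence $\overline{\mathcal{T}_f}=\mathcal{G}$, and this common set is compact.

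I do not expect any genuine obstacle here, as the corollary is essentially a bookkeeping restatement of the ``only if'' direction of Theorem~\ref{mthh}. The sole point requiring a moment of care is the passage from sequential compactness, as delivered by Proposition~\ref{prop}, to topological compactness and closedness of $\mathcal{G}$; this is justified by the metrizability of the uniform topology. Once that passage is noted, the density statement of Theorem~\ref{mth0} immediately closes the argument.
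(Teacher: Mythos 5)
Your proposal is correct and follows essentially the same route as the paper: the corollary is precisely the forward implication of Theorem~\ref{mthh}, whose proof the paper assembles from the same ingredients you cite (Corollary~\ref{cor5}/Lemma~\ref{lem} for $\mathcal{T}_f\subseteq\mathcal{G}$, Theorem~\ref{mth0} for density, and Proposition~\ref{prop} for sequential compactness of $\mathcal{G}$). Your explicit remark that metrizability of the uniform topology converts sequential compactness into compactness and closedness is a point the paper leaves implicit, but it is the same argument.
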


\subsection{On the spaces $AP(U,\mathbb{C})$, $U\subset \mathbb{C}$}

\ \\[-0.05cm]

We next deal with the case of the classes $\mathcal{D}_{\Lambda}$ (see Definition \ref{DF}) which give rise to the space of the almost periodic functions $AP(U,\mathbb{C})$ in some open vertical strips $U\subset\mathbb{C}$. It is worth remarking that the Dirichlet series associated with a function $f\in \mathcal{D}_{\Lambda}$ represents the Fourier series of $f(s)$ on any line of the strip of almost periodicity. In fact, any function $f\in \mathcal{D}_{\Lambda}$, which is almost periodic in $U=\{\sigma+it\in\mathbb{C}:\alpha<\sigma<\beta\}$, satisfies that the Fourier series of $f_{\sigma_0}(t):=f(\sigma_0+it)$, with $\sigma_0\in(\alpha,\beta)$, has the same expression $\sum_{j\geq 1}a_je^{\lambda_j\sigma_0}e^{i\lambda_j t}$ independently of $\sigma_0$.

From now on, we will consider that the space of all analytic functions on open vertical strips $U$ is endowed with the topology of uniform convergence on every reduced strip of $U$, in particular in the space of functions which have an associated Dirichlet series.
Note firstly that, by using Lemma \ref{defnueva2}, if $f\in AP(U,\mathbb{C})$ then any function of its equivalence class is also included in $AP(U,\mathbb{C})$. 
As in Proposition \ref{prop}, it is verified that 
the equivalence classes of $\mathcal{D}_{\Lambda}/\shortstack{$_{{\fontsize{6}{7}\selectfont *}}$\\$\sim$}$ are closed.

\begin{proposition}\label{propppult}
Let $\Lambda$ be a set of exponents and $\mathcal{G}$ an equivalence class in $\mathcal{D}_{\Lambda}/\shortstack{$_{{\fontsize{6}{7}\selectfont *}}$\\$\sim$}$, whose functions are almost periodic in an open vertical strip $U$. In the space of analytic functions on $U$, endowed with the topology of uniform convergence on reduced strips, $\mathcal{G}$ is sequentially compact.
\end{proposition}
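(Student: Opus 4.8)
The plan is to reduce Proposition \ref{propppult} to its real-variable counterpart, Proposition \ref{prop}, by slicing the strip along vertical lines and exploiting that a function in $\mathcal{D}_{\Lambda}$ is determined by its behaviour on countably many such lines. Let $\{f_l\}_{l\geq 1}$ be a sequence in $\mathcal{G}$, and suppose the Dirichlet series of $f_l$ is $\sum_{j\geq 1}a_{l,j}e^{\lambda_j s}$. For a fixed abscissa $\sigma_0\in(\alpha,\beta)$, the restriction $f_{l,\sigma_0}(t):=f_l(\sigma_0+it)$ is almost periodic on $\mathbb{R}$ with Fourier series $\sum_{j\geq 1}(a_{l,j}e^{\lambda_j\sigma_0})e^{i\lambda_j t}$, so $f_{l,\sigma_0}\in\mathcal{F}_{\Lambda}$. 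Crucially, the equivalence $f_1\ \shortstack{$_{{\fontsize{6}{7}\selectfont *}}$\\$\sim$}\ f_l$ in $\mathcal{D}_{\Lambda}$ translates (via Proposition \ref{DefEquiv}, since the factor $e^{\lambda_j\sigma_0}$ is real and positive and hence does not affect the phase relation) into $f_{1,\sigma_0}\ \shortstack{$_{{\fontsize{6}{7}\selectfont *}}$\\$\sim$}\ f_{l,\sigma_0}$ in $\mathcal{F}_{\Lambda}$, so $\{f_{l,\sigma_0}\}_{l\geq 1}$ lies in a single equivalence class of $\mathcal{F}_{\Lambda}/\shortstack{$_{{\fontsize{6}{7}\selectfont *}}$\\$\sim$}$.

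By Proposition \ref{prop}, each such class is sequentially compact. Applying it first on two reference lines $\sigma=\sigma_1$ and $\sigma=\sigma_2$ (with $\alpha<\sigma_1<\sigma_2<\beta$) and passing to a single subsequence, I would extract a subsequence $\{f_{l_m}\}$ such that $f_{l_m,\sigma_1}\to g_{\sigma_1}$ and $f_{l_m,\sigma_2}\to g_{\sigma_2}$ uniformly on $\mathbb{R}$, where $g_{\sigma_1},g_{\sigma_2}\in AP(\mathbb{R},\mathbb{C})$ are equivalent to $f_{1,\sigma_1},f_{1,\sigma_2}$ respectively. The coefficients of the limit functions are governed by the same phase vector, exactly as in the construction inside the proof of Proposition \ref{prop}: there is a common $\mathbf{x}_0$ (built coordinate-by-coordinate through the nested-subsequence diagonal argument) producing limit coefficients $b_j=a_{1,j}e^{i<\mathbf{r}_j,\mathbf{x}_0>}$, and this single $\mathbf{x}_0$ works simultaneously on both lines because the frequency data $\mathbf{r}_j$ and the factor $e^{\lambda_j\sigma}$ are shared. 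Thus the candidate limit on the whole strip is the function $f$ whose Dirichlet series is $\sum_{j\geq 1}b_j e^{\lambda_j s}$.

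To complete the argument I would invoke Lemma \ref{defnueva2}: since $\sum_{j\geq 1}b_j e^{\lambda_j s}$ is equivalent to the Dirichlet series of $f_1\in AP(U,\mathbb{C})$, it is the Dirichlet series of an almost periodic function $f\in AP(U,\mathbb{C})$ with $f\ \shortstack{$_{{\fontsize{6}{7}\selectfont *}}$\\$\sim$}\ f_1$, hence $f\in\mathcal{G}$. It remains to upgrade the line-wise convergence to uniform convergence on every reduced strip $U_1=\{\sigma_1\leq\operatorname{Re}s\leq\sigma_2\}$. For this I would appeal to the theorem of \cite[p.~149]{Besi} used in Lemma \ref{defnueva2}, together with the maximum-type control of an almost periodic analytic function on a strip by its boundary lines: the sequence $\{f_{l_m}-f\}$ converges uniformly to $0$ on the two bounding lines $\sigma_1,\sigma_2$, and boundedness of the analytic almost periodic functions on the strip forces uniform convergence throughout $U_1$.

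The main obstacle I anticipate is precisely this last step, namely passing from pointwise-on-lines convergence to uniform-on-strips convergence. On a single line Proposition \ref{prop} delivers uniform convergence, but to control the interior of the strip one must ensure the convergence is uniform across abscissae, not merely on a discrete set of lines. The cleanest route is to show that the differences $f_{l_m}(s)-f(s)$ form a uniformly bounded family of analytic almost periodic functions on $U_1$ whose restrictions to the two boundary lines tend uniformly to zero, and then use a Phragmén--Lindelöf / three-lines type estimate (or the boundary-value characterization of \cite{Besi}) to conclude uniform smallness on all of $U_1$; care is needed because $U_1$ is an infinite strip, so the bound must be genuinely uniform in the imaginary direction, which is exactly where almost periodicity (rather than mere analyticity) is essential.
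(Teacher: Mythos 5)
Your proposal takes a genuinely different route from the paper's. The paper's own proof of Proposition \ref{propppult} is a single sentence: it is ``analogous to that of Proposition \ref{prop}'', i.e.\ the authors intend the whole Bochner--Fej\'{e}r diagonal construction to be repeated in the complex setting (nested subsequences of polynomials $P_{l,k}(s)$, the phase-vector bookkeeping of Remark \ref{add}, Lemma \ref{defnueva2} in place of Lemma \ref{defnueva}, Helly's principle at the end), never leaving the almost-periodic machinery. You instead reuse Proposition \ref{prop} as a black box: restrict to vertical lines (the restrictions do lie in one class of $\mathcal{F}_{\Lambda}/\sim$, by Remark \ref{unodrx}~i), since all coefficients are multiplied by the same positive factors $e^{\lambda_j\sigma_0}$), extract a convergent subsequence there, identify the limit Dirichlet series, place it in $\mathcal{G}$ via Lemma \ref{defnueva2}, and then promote line convergence to strip convergence by a Phragm\'{e}n--Lindel\"{o}f/three-lines bound. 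This last step is sound: $f_{l_m}-f$ is analytic and bounded on the closed reduced strip (almost periodic analytic functions are bounded on reduced strips), so indeed $\sup_{U_1}|f_{l_m}-f|\le\max\bigl(\sup_{\operatorname{Re}s=\sigma_1}|f_{l_m}-f|,\ \sup_{\operatorname{Re}s=\sigma_2}|f_{l_m}-f|\bigr)\to 0$. Your route is shorter and arguably cleaner, at the price of importing a complex-analytic tool the paper's argument avoids.

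Two points need tightening. First, the claim that a ``common $\mathbf{x}_0$ works simultaneously on both lines'' is loose as written; the clean justification is that uniform convergence on a line forces convergence of Fourier coefficients, since
\begin{equation*}
\bigl|M\{h_m(t)e^{-i\lambda_j t}\}-M\{h(t)e^{-i\lambda_j t}\}\bigr|\le\sup_{t\in\mathbb{R}}|h_m(t)-h(t)|,
\end{equation*}
so on both lines the limits have coefficients $b_je^{\lambda_j\sigma_i}$ for a single sequence $b_j=\lim_m a_{l_m,j}$; consistency is automatic and no shared phase vector needs to be exhibited (equivalence of $\sum_j b_je^{\lambda_js}$ with the series of $f_1$ already follows from the equivalence delivered by Proposition \ref{prop} on one line). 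Second, and more substantively, your extraction is tied to the two fixed reference lines $\sigma_1,\sigma_2$, so it yields uniform convergence only on the strip they bound; sequential compactness in the stated topology requires one subsequence converging uniformly on \emph{every} reduced strip, and reduced strips may approach $\alpha$ and $\beta$. The fix is routine: exhaust $U$ by reduced strips $U_k$ with $\sigma_1^{(k)}\downarrow\alpha$ and $\sigma_2^{(k)}\uparrow\beta$, run your argument on each $U_k$ extracting nested subsequences (the limits agree, by coefficient convergence and uniqueness of the Dirichlet series), and take the diagonal subsequence. With these two repairs your proof is complete.
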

\begin{proof}
The proof of this result is analogous to that of Proposition \ref{prop}.
\end{proof}

In particular, we deduce from the result above that the limit points of the family of translates $\mathcal{T}_f=\{f_{\tau}(s):=f(s+i\tau):\tau\in\mathbb{R}\}$, of a function $f\in \mathcal{D}_{\Lambda}$, are included in its equivalence class in $\mathcal{D}_{\Lambda}/\shortstack{$_{{\fontsize{6}{7}\selectfont *}}$\\$\sim$}$.
More so, fixed a function $f\in \mathcal{D}_{\Lambda}$,  the limit points of the set of the translates $\mathcal{T}_f=\{f(s+i\tau):\tau\in\mathbb{R}\}$ of $f$ are precisely the functions which are equivalent to $f$, which is technically proved \textit{mutatis mutandis} as in Theorem \ref{mth0}.

\begin{theorem}\label{mth}
Let $\Lambda$ be a set of exponents, $\mathcal{G}$ an equivalence class in $\mathcal{D}_{\Lambda}/\shortstack{$_{{\fontsize{6}{7}\selectfont *}}$\\$\sim$}$
and $f\in \mathcal{G}$. 
Then the set of functions $\mathcal{T}_f=\{f_{\tau}(s):=f(s+i\tau):\tau\in\mathbb{R}\}$ is dense in $\mathcal{G}$.
\end{theorem}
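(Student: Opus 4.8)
The plan is to reproduce the architecture of the proof of Theorem \ref{mth0}, replacing the real line by the vertical strip $U=\{\alpha<\sigma<\beta\}$ and the topology of uniform convergence on $\mathbb{R}$ by that of uniform convergence on reduced strips. The easy inclusion is already available: by Proposition \ref{propppult} the class $\mathcal{G}$ is sequentially compact, and since every vertical translate $f(s+i\tau)$ remains in $\mathcal{G}$ (the complex counterpart of Lemma \ref{lem}, noted just before the statement), every limit point of $\mathcal{T}_f$ lies in $\mathcal{G}$. It therefore remains to prove the reverse, namely that an arbitrary $h\in\mathcal{G}$ can be approximated uniformly on each reduced strip by vertical translates of $f$.

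First I would settle the case $\sharp\Lambda<\infty$. Here $f$ and $h$ are themselves finite exponential sums in $\mathcal{P}_{\Lambda}$ and, lying in the same class, are equivalent in the sense of $\sim^{*}$. Fixing an exhaustion of $U$ by reduced strips $U^{(n)}=\{\sigma_n\le\sigma\le\sigma_n'\}$ with $\sigma_n\downarrow\alpha$ and $\sigma_n'\uparrow\beta$, I would apply Theorem \ref{prop3} on $U^{(n)}$ with tolerance $1/n$ and select from the resulting relatively dense set an increasing unbounded sequence $\{\tau_n\}$ with $\sup_{U^{(n)}}|f(s+i\tau_n)-h(s)|<1/n$. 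Since any prescribed reduced strip is eventually contained in some $U^{(n)}$, this yields $f(s+i\tau_n)\to h$ uniformly on every reduced strip.

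For the case $\sharp\Lambda=\infty$ I would pass through the Bochner--Fej\'er polynomials $P_n(s)=\sum_j p_{j,n}a_j e^{\lambda_j s}$ and $Q_n(s)=\sum_j p_{j,n}b_j e^{\lambda_j s}$ attached to $f$ and $h$; the factors $p_{j,n}$ depend only on $\Lambda$, hence coincide for both, and each sequence converges uniformly on reduced strips to $f$ and $h$ respectively. Because $h$ is equivalent to $f$, Remark \ref{unodrx} i) (preservation of $\sim$ under multiplication of coefficients) gives $P_n\sim^{*}Q_n$ as finite sums in $\mathcal{P}_{\Lambda}$. Given a reduced strip $U'$ and $\varepsilon>0$, I would first choose $N$ with $\sup_{U'}|f-P_N|<\varepsilon/3$ and $\sup_{U'}|h-Q_N|<\varepsilon/3$, then apply Theorem \ref{prop3} to the equivalent pair $P_N,Q_N$ on $U'$ with tolerance $\varepsilon/3$ to produce a relatively dense set of $\tau$ with $\sup_{U'}|P_N(s+i\tau)-Q_N(s)|<\varepsilon/3$. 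As a reduced strip is invariant under the shift $s\mapsto s+i\tau$, one has $|f(s+i\tau)-P_N(s+i\tau)|\le\sup_{U'}|f-P_N|$, so the triangle inequality gives $\sup_{U'}|f(s+i\tau)-h(s)|<\varepsilon$. Running this over the exhausting strips $U^{(n)}$ with $\varepsilon=1/n$ furnishes an increasing unbounded sequence $\{\tau_n\}$ realizing $f(s+i\tau_n)\to h$ uniformly on every reduced strip, which establishes the density.

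The main obstacle, and the only genuine departure from the real-variable argument, is bookkeeping the topology: the Bochner--Fej\'er approximation is uniform only on a \emph{fixed} reduced strip rather than on all of $U$, so one must freeze the strip before invoking Theorem \ref{prop3} and then recover convergence on every reduced strip through the exhaustion/diagonal device. I would also record the harmless but essential observation that reduced strips are stable under imaginary translation, which is precisely what allows the fixed-strip estimates on $f-P_N$ and $h-Q_N$ to survive the shift and feed into the final triangle inequality.
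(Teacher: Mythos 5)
Your proposal is correct and follows essentially the same route as the paper: the paper proves Theorem \ref{mth} only by declaring it \emph{mutatis mutandis} from Theorem \ref{mth0} (easy inclusion via Lemma \ref{lem} and Proposition \ref{propppult}, then Theorem \ref{prop3} directly for finite $\Lambda$, and via matched Bochner--Fej\'er polynomials plus the triangle inequality for infinite $\Lambda$), which is precisely the architecture you reproduce. Your explicit bookkeeping with the exhaustion by reduced strips and the invariance of strips under vertical shifts is exactly the adaptation the paper leaves implicit, so there is nothing to correct.
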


\begin{corollary}\label{cmth}
Let $f$ be an almost periodic function in an open vertical strip $U$ and $f_1\ \shortstack{$_{{\fontsize{6}{7}\selectfont *}}$\\$\sim$}\ f$.
There exists an increasing\-
unbounded sequence $\{\tau_n\}_{n\geq 1}$
of positive numbers such that the sequence of functions
$\{f(s+i\tau_n)\}_{n\geq 1}$ converges uniformly on every reduced strip of $U$ to $f_1(s)$. In fact, given $\varepsilon>0$ and a reduced strip $U_1\subset U$ there exists a relatively dense set of positive numbers $\tau$ such that
$$|f(s+i\tau)-f_1(s)|<\varepsilon,\ \forall s\in U_1.$$
\end{corollary}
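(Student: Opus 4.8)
The plan is to reduce Corollary \ref{cmth} directly to Theorem \ref{mth} together with the defining property of almost periodicity in $AP(U,\mathbb{C})$, exactly mirroring the structure of the real-variable Corollary proved earlier (the one following Theorem \ref{mth0}). First I would observe that since $f$ is almost periodic in the open vertical strip $U$, it belongs to $\mathcal{D}_{\Lambda}$ for some set $\Lambda$ of exponents, so I may take $\mathcal{G}$ to be the equivalence class in $\mathcal{D}_{\Lambda}/\shortstack{$_{{\fontsize{6}{7}\selectfont *}}$\\$\sim$}$ containing $f$. Because $f_1\ \shortstack{$_{{\fontsize{6}{7}\selectfont *}}$\\$\sim$}\ f$, Lemma \ref{defnueva2} guarantees that $f_1\in AP(U,\mathbb{C})$ and lies in the same class $\mathcal{G}$. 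Theorem \ref{mth} then says $\mathcal{T}_f$ is dense in $\mathcal{G}$, so in particular $f_1$ is a limit point of the translates $\{f(s+i\tau):\tau\in\mathbb{R}\}$ in the topology of uniform convergence on reduced strips; extracting an appropriate sequence yields the increasing unbounded sequence $\{\tau_n\}$ with $f(s+i\tau_n)\to f_1(s)$ uniformly on each reduced strip of $U$. (That the $\tau_n$ can be chosen positive and unbounded follows, as in the proof of Theorem \ref{mth0}, from the Kronecker/Bochner--Fej\'er construction underlying the density statement.)

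The second, slightly more substantive half is upgrading the mere existence of a convergent sequence to the \emph{relatively dense} set of shift parameters $\tau$. Here I would fix $\varepsilon>0$ and a reduced strip $U_1\subset U$, and run the same two-step argument as in the real case. From the convergent sequence there is some $n_0$ with
\begin{equation*}
|f(s+i\delta_n)-f_1(s)|<\varepsilon/2\quad\forall n\geq n_0,\ \forall s\in U_1,
\end{equation*}
where $\{\delta_n\}$ is the unbounded sequence just produced. Since $f\in AP(U,\mathbb{C})$, its vertical translates satisfy Bohr's condition on $U_1$: there is $l=l(\varepsilon)>0$ such that every interval of length $l$ contains a number $\tau$ with $|f(s+i\tau)-f(s)|<\varepsilon/2$ for all $s\in U_1$. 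Combining these by the triangle inequality,
\begin{equation*}
|f(s+i(\delta_{n_0}+\tau))-f_1(s)|\leq|f(s+i(\delta_{n_0}+\tau))-f(s+i\delta_{n_0})|+|f(s+i\delta_{n_0})-f_1(s)|<\varepsilon
\end{equation*}
for all $s\in U_1$, and the set of admissible $\delta_{n_0}+\tau$ inherits relative density from that of the translation numbers $\tau$.

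The only genuine obstacle is that everything must be controlled \emph{on the reduced strip} $U_1$ rather than on a single line, so one must invoke the strip version of Bohr's almost-periodicity condition for $f$ (uniform in $s\in U_1$) rather than the real-line statement; this is exactly the definition of $AP(U,\mathbb{C})$ recalled in the Introduction, so no new work is required, and the translation number $\tau$ is chosen once for the whole strip. I expect no further difficulty: the relative density of $\{\delta_{n_0}+\tau\}$ follows from invariance of relative density under a fixed additive shift, and positivity is preserved by choosing $n_0$ large so that $\delta_{n_0}$ dominates, exactly as in the proof of the earlier Corollary \ref{cmth} for the real case.
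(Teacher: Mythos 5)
Your proposal is correct and follows essentially the same route as the paper, which states this corollary without a separate proof precisely because it is the \emph{mutatis mutandis} transposition of the real-variable corollary proved after Theorem \ref{mth0}: one invokes Lemma \ref{defnueva2} to place $f_1$ in the same class, Theorem \ref{mth} (and its proof) for the increasing unbounded sequence of translates, and then Bohr's strip-uniform almost-periodicity condition plus the triangle inequality to upgrade to a relatively dense set of shifts. Your handling of the two minor points (that the translation number must be chosen uniformly on the reduced strip $U_1$, and that relative density survives the fixed shift by $\delta_{n_0}$) matches the paper's argument in the real case.
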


Finally, the following main theorem, formulated in terms of Bochner's result, characterizes the space of functions $AP(U,\mathbb{C})$. Its demonstration is analogous to that of Theorem \ref{mthh}.

\begin{theorem}\label{mthhcc}Let $f\in B(U,\mathbb{C})$. Then $f$ is in $AP(U,\mathbb{C})$ if and only if the closure of its set of vertical translates is compact and it coincides with a certain equivalence class of $\mathcal{D}_{\Lambda}/\shortstack{$_{{\fontsize{6}{7}\selectfont *}}$\\$\sim$}$ for some set $\Lambda$ of exponents.
\end{theorem}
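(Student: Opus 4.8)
The plan is to mirror the structure of the proof of Theorem \ref{mthh}, now working in the strip setting with the topology of uniform convergence on reduced strips of $U$. First I would recall that every $f\in B(U,\mathbb{C})$ carries an associated Dirichlet series $\sum_{j\geq 1}a_je^{\lambda_js}$, whose exponents determine a set $\Lambda$, so that the statement is meaningful. For the forward implication, suppose $f\in AP(U,\mathbb{C})$. Then $f\in\mathcal{D}_{\Lambda}$ for the set $\Lambda$ of its frequencies, and I would let $\mathcal{G}$ denote the equivalence class of $\mathcal{D}_{\Lambda}/\shortstack{$_{{\fontsize{6}{7}\selectfont *}}$\\$\sim$}$ containing $f$. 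By Proposition \ref{propppult}, $\mathcal{G}$ is sequentially compact in this topology, hence closed; and by Theorem \ref{mth} the family of vertical translates $\mathcal{T}_f=\{f(s+i\tau):\tau\in\mathbb{R}\}$ is dense in $\mathcal{G}$. Combining density with closedness yields that the closure of $\mathcal{T}_f$ is exactly $\mathcal{G}$, which is compact, establishing simultaneously the compactness and the coincidence with an equivalence class.

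For the converse, I would invoke Bochner's characterization in the complex case, recalled in the Introduction: almost periodicity of $f$ in $U$ is equivalent to the relative compactness of the set of its vertical translates $\{f(s+i\tau):\tau\in\mathbb{R}\}$ with respect to uniform convergence on reduced strips. Thus, assuming the closure of the set of vertical translates is compact, $f$ is automatically in $AP(U,\mathbb{C})$; the additional hypothesis that this closure coincides with an equivalence class of $\mathcal{D}_{\Lambda}/\shortstack{$_{{\fontsize{6}{7}\selectfont *}}$\\$\sim$}$ is not needed for this direction, exactly as in Theorem \ref{mthh}.

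The main obstacle is not the two implications in isolation but ensuring that the \emph{sequential} compactness furnished by Proposition \ref{propppult} delivers genuine compactness of the closure, so that it matches Bochner's relative-compactness hypothesis. Here I would rely on the fact that the topology of uniform convergence on reduced strips of $U$ is metrizable: it is induced by the countable family of sup-seminorms taken over an exhausting sequence of reduced strips $U_1\subset U_2\subset\cdots$ with $\bigcup_n U_n=U$, so that sequential compactness and compactness coincide. With metrizability in hand, the crux is the identification of $\overline{\mathcal{T}_f}$ with $\mathcal{G}$: density of $\mathcal{T}_f$ places $\overline{\mathcal{T}_f}$ inside $\overline{\mathcal{G}}=\mathcal{G}$, while Theorem \ref{mth} forces every point of $\mathcal{G}$ to be a limit of translates, giving the reverse inclusion.
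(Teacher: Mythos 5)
Your proposal is correct and follows essentially the same route as the paper, which proves this theorem by the argument of Theorem \ref{mthh} transported to the strip setting: Theorem \ref{mth} for density of the vertical translates in the class, Proposition \ref{propppult} for (sequential) compactness of the class, and Bochner's complex-variable characterization for the converse. Your additional remark on metrizability of the topology of uniform convergence on reduced strips, which reconciles sequential compactness with compactness, is a worthwhile explicit justification of a point the paper leaves implicit.
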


\section{Some applications to the exponential sums which converge absolutely and, in particular, to the Riemann zeta function}\label{ms}

The results of the previous section can be particularized to the following classes of exponential sums.
\begin{definition}
Let $\Lambda=\{\lambda_1,\lambda_2,\ldots,\lambda_j,\ldots\}$ be a set of exponents. We will say that a function $f:\mathbb{C}\mapsto\mathbb{C}$ is in the class $\mathcal{A}_{\Lambda}$ if it is an exponential sum of the form
 \begin{equation}\label{eqq}
f(s)=\sum_{j\geq 1}a_je^{\lambda_js},\ a_j\in\mathbb{C},\ \lambda_j\in\Lambda,
\end{equation}
where the sum appearing in the right hand side of (\ref{eqq}) converges absolutely on some non-empty set $U\subset\mathbb{C}$.
\end{definition}

For a given set $\Lambda$ of exponents, the classes $\mathcal{A}_{\Lambda}$ are clearly non-empty and, in fact, it is easy to demonstrate that any $f\in \mathcal{A}_{\Lambda}$ converges absolutely on some vertical strip $U=\{s=\sigma+it\in\mathbb{C}:\sigma_l<\sigma<\sigma_r\}$, where $\sigma_l$ and $\sigma_r$ could eventually be $-\infty$ and $\infty$ respectively.

\begin{remark}\label{remarkk}
Let $AP_1(\mathbb{R},\mathbb{C})$ be the space of the almost periodic functions $h:\mathbb{R}\mapsto\mathbb{C}$ whose Fourier series are absolutely convergent, i.e. functions of the form $h(t)=\Sum_{j\geq 1}a_je^{i\lambda_j t}$, $t\in\mathbb{R}$, $\lambda_j\in\mathbb{R}$, $a_j\in\mathbb{C}$ (see \cite[Section 4.2]{Corduneanu}). Fixed a set $\Lambda$ of exponents, if $f(s)\in \mathcal{A}_{\Lambda}$ is of type (\ref{eqq}), which converges absolutely on a region $U=\{s=\sigma+it:\sigma_l<\sigma<\sigma_r\}$, then
the function $h_{\sigma_0}:\mathbb{R}\mapsto\mathbb{C}$, with $\sigma_0\in (\sigma_l,\sigma_r)$, defined as
$h_{\sigma_0}(t):=f(\sigma_0+it),\ t\in\mathbb{R},$
is in $AP_1(\mathbb{R},\mathbb{C})$ and it is also uniformly convergent.
Moreover, the function $f(s)$ is almost periodic in the strip $U$ \cite[p.144]{Besi} and it coincides with its associated Dirichlet series \cite[p.148, Theorem 1]{Besi}.
More so, the family $\{\mathcal{A}_{\Lambda}:\Lambda\ \mbox{is a set of exponents}\}$
gives us the space of the almost periodic functions whose Dirichlet series are absolutely convergent on some open vertical strip $U\subset \mathbb{C}$.
\end{remark}

\begin{remark}\label{dewfc}
Given $\Lambda=\{\lambda_1,\lambda_2,\ldots,\lambda_j,\ldots\}$ a set of exponents, let $f_1(s)=\sum_{j\geq 1}a_je^{\lambda_js}$ and $f_2(s)=\sum_{j\geq 1}b_je^{\lambda_js}$ be two exponential sums in $\mathcal{A}_{\Lambda}$ which converge absolutely on $U_{f_1}$ and $U_{f_2}$ respectively. If $f_1\ \shortstack{$_{{\fontsize{6}{7}\selectfont *}}$\\$\sim$}\ f_2$, it is clear that $|a_j|=|b_j|$ for each $j\geq 1$ and thus $U_{f_1}=U_{f_2}$. Hence, if $f_1\in AP(U,\mathbb{C})$, its Dirichlet series converges absolutely on $U$ and $f_1\ \shortstack{$_{{\fontsize{6}{7}\selectfont *}}$\\$\sim$}\ f_2$, then $f_2\in AP(U,\mathbb{C})$ and its Dirichlet series converges absolutely on $U$.
\end{remark}
From the remark above, with the topology of the uniform convergence on every reduced strip in $U$, we have the following result as a corollary of Theorem \ref{mth}.

\begin{corollary}\label{mthco}
Let $\Lambda$ be a set of exponents, $\mathcal{G}$ an equivalence class in $\mathcal{A}_{\Lambda}/\shortstack{$_{{\fontsize{6}{7}\selectfont *}}$\\$\sim$}$, whose functions converge absolutely on an open vertical strip $U$, and $f\in \mathcal{G}$.
Then the set of functions $\mathcal{T}_f=\{f_{\tau}(s):=f(s+i\tau):\tau\in\mathbb{R}\}$ is dense in $\mathcal{G}$.
\end{corollary}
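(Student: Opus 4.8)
The plan is to deduce the statement directly from Theorem \ref{mth} by showing that, although $\mathcal{A}_{\Lambda}$ is defined as a class of exponential sums converging absolutely, an equivalence class $\mathcal{G}$ in $\mathcal{A}_{\Lambda}/\shortstack{$_{{\fontsize{6}{7}\selectfont *}}$\\$\sim$}$ is \emph{the very same set of functions} as a suitable equivalence class in $\mathcal{D}_{\Lambda}/\shortstack{$_{{\fontsize{6}{7}\selectfont *}}$\\$\sim$}$. Since the ambient topology is in both cases that of uniform convergence on every reduced strip of $U$, once this identification is made the density of $\mathcal{T}_f$ transfers verbatim. I will therefore spend the proof establishing the coincidence of the two equivalence classes and then simply invoke Theorem \ref{mth}.

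First I would use Remark \ref{remarkk}: any $g\in\mathcal{A}_{\Lambda}$ whose Dirichlet series converges absolutely on $U$ is almost periodic in the strip $U$ and coincides there with its associated Dirichlet series, so $g\in\mathcal{D}_{\Lambda}$. In particular the whole class $\mathcal{G}$ lies in $\mathcal{D}_{\Lambda}$, and hence $\mathcal{G}$ is contained in the equivalence class $\widetilde{\mathcal{G}}$ of $f$ in $\mathcal{D}_{\Lambda}/\shortstack{$_{{\fontsize{6}{7}\selectfont *}}$\\$\sim$}$. For the reverse inclusion $\widetilde{\mathcal{G}}\subseteq\mathcal{G}$, I would take any $g\in\widetilde{\mathcal{G}}$, that is $g\in\mathcal{D}_{\Lambda}$ with $g\ \shortstack{$_{{\fontsize{6}{7}\selectfont *}}$\\$\sim$}\ f$, and apply the last sentence of Remark \ref{dewfc}: since the Dirichlet series of $f$ converges absolutely on $U$ and equivalence forces $|b_j|=|a_j|$ on the coefficients, the Dirichlet series of $g$ converges absolutely on $U$ as well. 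Thus $g\in\mathcal{A}_{\Lambda}$ and $g\in\mathcal{G}$, which yields $\mathcal{G}=\widetilde{\mathcal{G}}$.

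With this identification the conclusion is immediate. Theorem \ref{mth}, applied to the equivalence class $\widetilde{\mathcal{G}}$ in $\mathcal{D}_{\Lambda}/\shortstack{$_{{\fontsize{6}{7}\selectfont *}}$\\$\sim$}$ and to $f\in\widetilde{\mathcal{G}}$, asserts that $\mathcal{T}_f=\{f(s+i\tau):\tau\in\mathbb{R}\}$ is dense in $\widetilde{\mathcal{G}}$; since $\widetilde{\mathcal{G}}=\mathcal{G}$ and the topology is unchanged, $\mathcal{T}_f$ is dense in $\mathcal{G}$. I would also note, for internal consistency, that Lemma \ref{lem} together with the absolute-convergence preservation of Remark \ref{dewfc} guarantees $\mathcal{T}_f\subseteq\mathcal{G}$. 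The only genuinely delicate point — and really the entire content of the argument — is the coincidence $\mathcal{G}=\widetilde{\mathcal{G}}$, which rests on the fact that absolute convergence on $U$ is invariant under $\shortstack{$_{{\fontsize{6}{7}\selectfont *}}$\\$\sim$}$; beyond that there is no analytic obstacle, and the corollary is a direct transcription of Theorem \ref{mth}.
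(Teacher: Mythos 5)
Your proposal is correct and follows essentially the same route as the paper, which likewise obtains the corollary by combining Remark \ref{dewfc} (invariance of the strip of absolute convergence under the equivalence relation, hence the identification of the class in $\mathcal{A}_{\Lambda}$ with the corresponding class in $\mathcal{D}_{\Lambda}$) with Theorem \ref{mth}. You merely spell out the class identification that the paper leaves implicit, which is a faithful elaboration rather than a different argument.
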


Let $\Lambda_P=\{-\log 2,-\log 3,-\log 4,\ldots,-\log j,\ldots\},$ then the Riemann zeta function
$\zeta(s)=\sum_{n=1}^{\infty}e^{-s\log n}=\sum_{n\geq 1}\frac{1}{n^s}$,
which converges absolutely in $\{s=\sigma+it:\sigma>1\}$, is in the class $\mathcal{A}_{\Lambda_P}$.
In fact, a basis for $\Lambda_P$ is given by
$G_{\Lambda_P}=\{-\log 2,-\log3,-\log5,\ldots,-\log p_k,\ldots\},$ where $p_k$ is the $k$-th prime number.
Furthermore, from Corollary \ref{cmth}, we get the next consequence for the Riemann zeta function $\zeta(s)$.

\begin{theorem}\label{mtrzf}
Let $f_1(s)=\sum_{n\geq 1}\frac{a_n}{n^s}$, $a_n\in\mathbb{C}$, be an exponential sum which is equivalent to $\zeta(s)$.
There exists an increasing unbounded sequence $\{\tau_n\}_{n\geq 1}$
of positive numbers such that the sequence of functions
$\{\zeta(s+i\tau_n)\}_{n\geq 1}$ converges uniformly to $f_1(s)$ on
every reduced strip of $\{s\in\mathbb{C}:\operatorname{Re}s>1\}$.
\end{theorem}

Let $\zeta_{\lambda}(s):=\sum_{n\geq 1}\frac{\lambda(n)}{n^s}$ be the Dirichlet series for the Liouville function $\lambda(n)$ \cite{Lehman}. Since $\zeta(s)$ is equivalent to itself and it is also equivalent to $\zeta_{\lambda}(s)$, we get the following corollary.

\begin{corollary}\label{prob1}
There exist two increasing unbounded sequences $\{\tau_n\}_{n\geq 1}$ and $\{\varsigma_n\}_{n\geq 1}$
of positive numbers such that the sequences of functions
$\{\zeta(s+i\tau_n)\}_{n\geq 1}$ and $\{\zeta(s+i\varsigma_n)\}_{n\geq 1}$ converge uniformly to $\zeta(s)$ and $\zeta_{\lambda}(s)$, respectively, on
every reduced strip of $\{s\in\mathbb{C}:\operatorname{Re}s>1\}$.
\end{corollary}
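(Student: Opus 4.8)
The plan is to obtain Corollary \ref{prob1} as a twofold application of Theorem \ref{mtrzf}; the only substantive point is to exhibit, for each target sum, the equivalence with $\zeta(s)$. For the first sequence I would simply take $f_1=\zeta$ in Theorem \ref{mtrzf}: the exponential sum $\zeta(s)=\sum_{n\geq 1}n^{-s}$ is equivalent to itself, since the zero $\mathbb{Q}$-linear map $\psi\equiv 0$ of Definition \ref{DefEquiv0} gives $a(\lambda)=a(\lambda)e^{i\psi(\lambda)}$. Theorem \ref{mtrzf} then yields an increasing unbounded sequence $\{\tau_n\}_{n\geq 1}$ of positive reals with $\zeta(s+i\tau_n)\to\zeta(s)$ uniformly on every reduced strip of $\{\operatorname{Re}s>1\}$.

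For the second sequence I would apply Theorem \ref{mtrzf} with $f_1=\zeta_\lambda$, which requires checking $\zeta\ \shortstack{$_{{\fontsize{6}{7}\selectfont *}}$\\$\sim$}\ \zeta_{\lambda}$. Writing $\zeta(s)=\sum_{n\geq 1}a_ne^{-s\log n}$ with $a_n=1$ and $\zeta_\lambda(s)=\sum_{n\geq 1}b_ne^{-s\log n}$ with $b_n=\lambda(n)$, I would invoke the integral-basis form of Proposition \ref{DefEquiv} relative to the prime-logarithm basis $G_{\Lambda_P}=\{-\log p_k\}_{k\geq 1}$. For this basis the vector $\mathbf{r}_n$ attached to $-\log n=\sum_k e_k(-\log p_k)$ records the exponents $e_k$ in the factorization $n=\prod_k p_k^{e_k}$. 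Taking $\mathbf{x}_0=(\pi,\pi,\pi,\ldots)$ gives $<\mathbf{r}_n,\mathbf{x}_0>=\pi\sum_k e_k=\pi\,\Omega(n)$, where $\Omega(n)$ counts prime factors with multiplicity, so that $e^{i<\mathbf{r}_n,\mathbf{x}_0>}=(-1)^{\Omega(n)}=\lambda(n)$ and hence $b_n=a_ne^{i<\mathbf{r}_n,\mathbf{x}_0>}$ for every $n\geq 1$ (the $n=1$ term matching automatically, as $\Omega(1)=0$ and $\lambda(1)=1$). By Proposition \ref{DefEquiv} this gives $\zeta\ \shortstack{$_{{\fontsize{6}{7}\selectfont *}}$\\$\sim$}\ \zeta_{\lambda}$, and Theorem \ref{mtrzf} furnishes the increasing unbounded sequence $\{\varsigma_n\}_{n\geq 1}$ with $\zeta(s+i\varsigma_n)\to\zeta_\lambda(s)$ uniformly on reduced strips.

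The deep content here is entirely absorbed into Theorem \ref{mtrzf}, so at the level of this corollary there is no real obstacle; the single step deserving attention is the verification $e^{i<\mathbf{r}_n,\mathbf{x}_0>}=\lambda(n)$, which hinges on the \emph{complete multiplicativity} of the Liouville function: precisely because $\lambda$ is completely multiplicative, the assignment $\psi(-\log p_k)=\pi$ on the basis extends $\mathbb{Q}$-linearly to give $e^{i\psi(-\log n)}=(-1)^{\Omega(n)}=\lambda(n)$ on all of $\Lambda_P$. I would also stress that $G_{\Lambda_P}$ is an \emph{integral} basis (the factorization exponents $e_k$ are nonnegative integers), which is exactly what allows the passage in Proposition \ref{DefEquiv} from the per-$n$ vectors to the single global vector $\mathbf{x}_0$ underpinning Theorem \ref{mtrzf}. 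Both sequences are increasing and unbounded because Theorem \ref{mtrzf} delivers them so, which completes the argument.
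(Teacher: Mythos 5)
Your proof is correct and takes essentially the same approach as the paper, which obtains the corollary by applying Theorem \ref{mtrzf} twice after observing that $\zeta(s)$ is equivalent both to itself and to $\zeta_{\lambda}(s)$. Your explicit verification that $\zeta$ and $\zeta_{\lambda}$ are equivalent---via the global vector $\mathbf{x}_0=(\pi,\pi,\pi,\ldots)$ over the integral basis $G_{\Lambda_P}=\{-\log p_k\}_{k\geq 1}$ and the identity $\lambda(n)=(-1)^{\Omega(n)}$---simply fills in a detail the paper leaves unstated.
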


As a consequence, we will obtain alternative demonstrations of some known
results related to the infimum of $|\zeta(s)|$ on certain regions in the half-plane $\{s=\sigma+it:\sigma\geq 1\}$.
We first consider the following preliminary result (compare also with \cite[Theorem
8.7]{Titchmarsh} and \cite[p. 288, Theorem 2]{KaraVoro}).

\begin{lemma}\label{prop1}
Let $\sigma_{0}$ be a real number greater than $1$. Then
\begin{equation*}
\inf \{\left\vert \zeta (s)\right\vert
:\operatorname{Re}s\geq\sigma _{0}\}=\frac{\zeta(2\sigma_0)}{\zeta(\sigma_0)} =\prod_{k=1}^{\infty}\frac{1}{1+p_{k}^{-\sigma _{0}}%
}.
\end{equation*}
Moreover, $\left\vert \zeta
(s)\right\vert>\Prod_{k=1}^{\infty}\frac{1}{1+p_{k}^{-\sigma _{0}}}$ $\forall
s\in\mathbb{C}:\operatorname{Re}s\geq\sigma _{0}$.
\end{lemma}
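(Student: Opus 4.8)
The plan is to combine the Euler product for $\zeta(s)$ with the approximation result of Corollary \ref{prob1}; the latter is exactly what makes this an \emph{alternative} demonstration. Recall that for $\operatorname{Re}s>1$ one has $\zeta(s)=\prod_{k\geq 1}(1-p_k^{-s})^{-1}$. The second displayed equality then follows immediately from the factorization $1-p_k^{-2\sigma_0}=(1-p_k^{-\sigma_0})(1+p_k^{-\sigma_0})$, which gives $\frac{\zeta(2\sigma_0)}{\zeta(\sigma_0)}=\prod_k\frac{1-p_k^{-\sigma_0}}{1-p_k^{-2\sigma_0}}=\prod_k\frac{1}{1+p_k^{-\sigma_0}}$. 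I would emphasise that this last product is precisely $\zeta_{\lambda}(\sigma_0)$: since the Liouville function is completely multiplicative with $\lambda(p_k)=-1$, its Dirichlet series has Euler product $\zeta_{\lambda}(s)=\prod_k(1+p_k^{-s})^{-1}$.

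For the lower bound write $\sigma=\operatorname{Re}s\geq\sigma_0$. Applying $|1-p_k^{-s}|\leq 1+|p_k^{-s}|=1+p_k^{-\sigma}$ term by term in the (absolutely convergent) Euler product yields $|\zeta(s)|=\prod_k|1-p_k^{-s}|^{-1}\geq\prod_k(1+p_k^{-\sigma})^{-1}$, and since $p_k^{-\sigma}\leq p_k^{-\sigma_0}$ for every $k$, this is $\geq\prod_k(1+p_k^{-\sigma_0})^{-1}$; hence the infimum is at least $\prod_k(1+p_k^{-\sigma_0})^{-1}$. To upgrade this to the strict \emph{Moreover} inequality, I would analyse equality in the triangle inequality: $|1-p_k^{-s}|=1+p_k^{-\sigma}$ forces $p_k^{-s}$ to be a negative real, i.e. $t\ln p_k\in\pi+2\pi\mathbb{Z}$. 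Because $\{\ln p_k\}$ is linearly independent over $\mathbb{Q}$, this congruence can hold for at most one index $k$ at a given $t$ (and for none when $t=0$); thus infinitely many factors are strict. Since all the products converge to positive limits for $\sigma>1$, a single strict factor forces $|\zeta(s)|>\prod_k(1+p_k^{-\sigma})^{-1}\geq\prod_k(1+p_k^{-\sigma_0})^{-1}$, giving the strict bound for every $s$ with $\operatorname{Re}s\geq\sigma_0$ (so, in particular, the infimum is not attained).

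It remains to show the infimum is not merely bounded below by $\prod_k(1+p_k^{-\sigma_0})^{-1}$ but equals it, and here I would invoke Corollary \ref{prob1}. Since $\zeta$ is equivalent to $\zeta_{\lambda}$, that corollary supplies an increasing unbounded sequence $\{\varsigma_n\}_{n\geq 1}$ with $\zeta(s+i\varsigma_n)\to\zeta_{\lambda}(s)$ uniformly on every reduced strip of $\{\operatorname{Re}s>1\}$. Choosing the reduced strip $\{\sigma_0\leq\operatorname{Re}s\leq\sigma_0+1\}$ and evaluating at the real point $s=\sigma_0$ gives $\zeta(\sigma_0+i\varsigma_n)\to\zeta_{\lambda}(\sigma_0)=\prod_k(1+p_k^{-\sigma_0})^{-1}$, whence $|\zeta(\sigma_0+i\varsigma_n)|\to\prod_k(1+p_k^{-\sigma_0})^{-1}$. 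This exhibits values of $|\zeta|$ on $\operatorname{Re}s\geq\sigma_0$ arbitrarily close to $\prod_k(1+p_k^{-\sigma_0})^{-1}$, so the infimum is at most this quantity; combined with the lower bound it equals it.

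The routine part is the Euler-product manipulation and the triangle-inequality estimate. The main obstacle is the strict inequality, which hinges on the rational independence of $\{\ln p_k\}$ to rule out simultaneous equality in all factors. Conceptually, however, the crux of this alternative proof is the identification $\prod_k(1+p_k^{-\sigma_0})^{-1}=\zeta_{\lambda}(\sigma_0)$ together with the equivalence of $\zeta$ and $\zeta_{\lambda}$, which lets Corollary \ref{prob1} realise the infimum as a limit of vertical translates of $\zeta$.
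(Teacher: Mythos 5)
Your proof is correct, but it reaches the two delicate parts of the statement by a genuinely different route than the paper. The paper gets both the lower bound and the fact that the bound is actually the infimum by citing Apostol (Section 7.6) for $\inf\{|\zeta(s)|:\operatorname{Re}s=\sigma_0\}=\zeta(2\sigma_0)/\zeta(\sigma_0)$, then performs the same Euler-product factorization you do, extends from the line to the half-plane by monotonicity of $\prod_k(1+p_k^{-\sigma})^{-1}$ in $\sigma$, and proves strictness by assuming equality at some $s_1=\sigma_0+it_1$ and deriving $t_1=(2m_k+1)\pi/\log p_k$ for \emph{every} $k$, contradicting the rational independence of $\{\log p_k\}$. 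You instead prove the lower bound elementarily (triangle inequality factor by factor in the Euler product), and—this is the real divergence—you obtain attainment of the infimum from the paper's own Corollary \ref{prob1}: the translates $\zeta(\sigma_0+i\varsigma_n)$ converge to $\zeta_\lambda(\sigma_0)=\prod_k(1+p_k^{-\sigma_0})^{-1}$. This is legitimate and non-circular, since Corollary \ref{prob1} descends from Theorem \ref{mth} and Theorem \ref{prop3} without ever invoking Lemma \ref{prop1}; in fact it transplants into the lemma itself the mechanism the paper only deploys later (in Corollary \ref{positive}), making the lemma a showcase of the equivalence machinery rather than a classical preliminary. What each approach buys: the paper's citation is shorter and keeps the lemma independent of the paper's main theorems; yours is self-contained within the paper and arguably truer to its advertised theme, at the cost of resting on the heavier Theorem \ref{mth}. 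Note also that both attainment arguments are ultimately Kronecker-theorem arguments (Apostol's proof versus Theorem \ref{prop3}), so they are cousins rather than strangers. Finally, your strictness argument is a cleaner local version of the paper's: observing that equality $|1-p_k^{-s}|=1+p_k^{-\sigma}$ can occur for at most one index $k$ (and none when $t=0$) handles all of $\operatorname{Re}s\geq\sigma_0$ at once, whereas the paper's contradiction is phrased only on the line $\operatorname{Re}s=\sigma_0$, leaving the (easy, monotonicity-based) off-line case implicit.
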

\begin{proof} If $\sigma_{0}>1$, it was proved in \cite[Section
7.6]{Apostol} that
$$\inf \{\left\vert \zeta (s)\right\vert
:\operatorname{Re}s=\sigma
_{0}\}=\Frac{\zeta(2\sigma_0)}{\zeta(\sigma_0)}.$$
On
the other hand, by using the Euler product formula, we have
$$\frac{\zeta(2\sigma_0)}{\zeta(\sigma_0)}=\lim_{n\rightarrow\infty}\frac{\Prod_{k=1}^{n}\Frac{1}{1-p_k^{-2\sigma_0}}}{\Prod_{k=1}^{n}\Frac{1}{1-p_k^{-\sigma_0}}}=\lim_{n\rightarrow\infty}\prod_{k=1}^{n}\frac{1}{1+p_{k}^{-\sigma _{0}}}=\prod_{k=1}^{\infty}\frac{1}{1+p_{k}^{-\sigma_{0}}},$$
which means that
$\inf \{\left\vert \zeta (s)\right\vert
:\operatorname{Re}s=\sigma
_{0}\}=\prod_{k=1}^{\infty}\frac{1}{1+p_{k}^{-\sigma _{0}}}.$
Now, if we take $\sigma\geq \sigma_0$ observe that
$$\Frac{1}{1+p_{k}^{-\sigma}}\geq \Frac{1}{1+p_{k}^{-\sigma_0}}$$
for each $k=1,2,\ldots$. Hence
$$\Prod_{k=1}^{\infty}\frac{1}{1+p_{k}^{-\sigma}}\geq
\Prod_{k=1}^{\infty}\frac{1}{1+p_{k}^{-\sigma _{0}}}$$ and
consequently
\begin{equation*}
\inf \{\left\vert \zeta (s)\right\vert
:\operatorname{Re}s\geq\sigma _{0}\}=\inf \{\left\vert \zeta
(s)\right\vert
:\operatorname{Re}s=\sigma _{0}\}=\prod_{k=1}^{\infty}\frac{1}{1+p_{k}^{-\sigma _{0}}%
}.
\end{equation*}
Finally, if we suppose the existence of some $s_1=\sigma_0+it_1$,
$t_1\in\mathbb{R}$, such that $\left\vert \zeta (s_1)\right\vert
=\prod_{k=1}^{\infty}\frac{1}{1+p_{k}^{-\sigma _{0}}%
}$, we deduce from the Euler product
formula that
\begin{equation}\label{rec}
\vert\zeta(s_1)\vert=\Prod_{k=1}^{\infty}\left\vert\Frac{1}{1-p_k^{-s_1}}\right\vert=\prod_{k=1}^{\infty}\Frac{1}{1+p_k^{-\sigma_0}}.
\end{equation}
Note that, for each $k=1,2,\ldots$, we have
$$\left\vert\Frac{1}{1-p_k^{-s_1}}\right\vert=\left|\frac{1}{1-p_k^{-\sigma_0}e^{-it_1\log p_k}}\right|\geq \Frac{1}{1+p_k^{-\sigma_0}}$$
and the equality is verified if and only if
$t_1=\Frac{(2m_k+1)\pi}{\log p_k}$ for some $m_k\in\mathbb{Z}$.
Therefore, from (\ref{rec}), this implies for each $k=1,2,\ldots$
that
$t_1=\Frac{(2m_k+1)\pi}{\log p_k}$ for some $m_k\in\mathbb{Z},$
which is clearly a contradiction because the numbers $\{\log
p_k:k=1,2,\ldots\}$ are linearly independent over the rationals.
\end{proof}
\vspace{1cm}

\begin{remark}\label{cor2}
We recall that the Riemann zeta function $\zeta(s)$ has a simple pole at $s=1$. Therefore,
since $\Frac{\zeta(2\sigma)}{\zeta(\sigma)}$ goes to $0$ as
$\sigma$ tends to $1^+$, we immediately obtain from Lemma \ref{prop1} that
\begin{equation*}
\inf \{\left\vert \zeta (s)\right\vert :\operatorname{Re}s> 1\}=\inf \{\left\vert \zeta (s)\right\vert :1<\operatorname{Re}s<\sigma_0\}=0,
\end{equation*}
for any $\sigma_0>1$.
That is, in spite of the absence of zeros, it is well known that $|\zeta(s)|$ takes arbitrarily small values in $\{s\in\mathbb{C}:\operatorname{Re}s>1\}$.
\end{remark}

As a consequence of Corollary \ref{prob1}, we next provide another
proof of \cite[Theorem 8.6]{Titchmarsh} or \cite[p. 288, Corollary 1]{KaraVoro}. In fact, for any positive
number $\tau>0$, we next construct a sequence of complex numbers
$\{w_n\}\subset \{s\in\mathbb{C}:\operatorname{Re}s>
1,\operatorname{Im}s>\tau\}$ such that
$\Lim_{n\rightarrow\infty}\zeta(w_n)=0$.

\begin{corollary}\label{positive}
Fixed $\tau>0$ and $\sigma_0>1$, $$
\inf \{\left\vert \zeta
(s)\right\vert :\operatorname{Re}s> 1,\operatorname{Im}s>\tau\}=\inf \{\left\vert \zeta
(s)\right\vert :1<\operatorname{Re}s<\sigma_0,\operatorname{Im}s>\tau\}=0.$$
\end{corollary}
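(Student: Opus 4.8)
The plan is to combine Corollary \ref{prob1} with Remark \ref{cor2}. By Remark \ref{cor2}, we already know that $\inf\{|\zeta(s)|:\operatorname{Re}s>1\}=0$; the only new content here is that this infimum remains $0$ once we further restrict to the upper region $\operatorname{Im}s>\tau$ (and simultaneously cap $\operatorname{Re}s<\sigma_0$). So the heart of the matter is to manufacture a sequence of points with imaginary part exceeding $\tau$ on which $|\zeta|$ tends to $0$, while keeping the real parts inside $(1,\sigma_0)$.

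First I would fix a real $\sigma_1$ with $1<\sigma_1<\sigma_0$. By Remark \ref{cor2} (equivalently by the behaviour of $\zeta(2\sigma)/\zeta(\sigma)$ as $\sigma\to 1^+$), on the vertical line $\operatorname{Re}s=\sigma_1$ the values $|\zeta(\sigma_1+it)|$ get arbitrarily close to the infimum $\zeta(2\sigma_1)/\zeta(\sigma_1)$, and by letting $\sigma_1\downarrow 1$ this infimum tends to $0$. The obstacle is that these small values occur at heights $t$ we do not control, and a priori they might all sit at heights below $\tau$. This is exactly where the equivalence machinery of the paper enters: I would invoke Corollary \ref{prob1}, which guarantees an increasing \emph{unbounded} sequence $\{\tau_n\}_{n\ge 1}$ of positive numbers with $\zeta(s+i\tau_n)\to\zeta(s)$ uniformly on every reduced strip of $\{\operatorname{Re}s>1\}$. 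Because the $\tau_n$ are unbounded, I may discard finitely many terms and assume $\tau_n>\tau$ for all $n$.

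Now I would argue as follows. Fix $\varepsilon>0$. By Remark \ref{cor2} there is a point $s_0=\sigma_*+it_0$ with $1<\sigma_*<\sigma_0$ and $|\zeta(s_0)|<\varepsilon/2$. Applying the uniform convergence of Corollary \ref{prob1} on a reduced strip containing $s_0$, there is an index $n$ with
\[
|\zeta(s_0+i\tau_n)-\zeta(s_0)|<\varepsilon/2,
\]
whence $|\zeta(s_0+i\tau_n)|<\varepsilon$. The shifted point $w:=s_0+i\tau_n$ has real part $\sigma_*\in(1,\sigma_0)$ and imaginary part $t_0+\tau_n$, which exceeds $\tau$ once $n$ is large enough (using $\tau_n>\tau$ and, if necessary, discarding more terms so that $t_0+\tau_n>\tau$). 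Letting $\varepsilon\to 0$ produces a sequence $\{w_n\}\subset\{1<\operatorname{Re}s<\sigma_0,\ \operatorname{Im}s>\tau\}$ with $|\zeta(w_n)|\to 0$.

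This shows both infima in the statement are $0$: the restricted region $\{1<\operatorname{Re}s<\sigma_0,\operatorname{Im}s>\tau\}$ is contained in $\{\operatorname{Re}s>1,\operatorname{Im}s>\tau\}$, so the latter infimum is $\le$ the former, which we have just shown to be $0$; and both are clearly $\ge 0$. The main obstacle is purely the control of the imaginary part, and it is resolved precisely by the unboundedness of the translation sequence $\{\tau_n\}$ furnished by Corollary \ref{prob1}, which lets us push the small-value points of $\zeta$ arbitrarily high without altering their real parts.
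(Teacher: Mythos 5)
Your proposal is correct and follows essentially the same route as the paper's own proof: both use Remark \ref{cor2} to produce points in $\{1<\operatorname{Re}s<\sigma_0\}$ where $|\zeta|$ is small, then invoke the unbounded translation sequence $\{\tau_n\}$ from Corollary \ref{prob1} (via uniform convergence on a reduced strip containing the point) to lift those points above height $\tau$ without changing their real parts. The only cosmetic difference is that the paper works with $1/m$ in place of your $\varepsilon$ and applies the shift only when the original point lies too low, but the argument is the same.
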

\begin{proof}
Fix $\tau>0$ and $\sigma_0>1$. From Remark \ref{cor2}, it is immediate that there
exists a sequence $\{w_m\}_{m\geq
1}\subset\{s\in\mathbb{C}:1<\operatorname{Re}s<\sigma_0\}$, with
$w_m=\sigma_m+it_m$, such that $|\zeta(w_m)|<\Frac{1}{m}$ for each
$m\in\mathbb{N}$. Suppose $t_m\leq \tau$ for some integer number $m\geq 1$. Thus, by
Corollary \ref{prob1}, there exists an increasing unbounded
sequence $\{\tau_{n}\}$ of positive real numbers such that
$\left\{\zeta(s+i\tau_{n})\right\}_n$ converges uniformly to
$\zeta(s)$ on every reduced strip $U_1$ of
$\{s\in\mathbb{C}:\operatorname{Re}s>1\}$. Take a reduced strip $U_1$ such that $w_m\in U_1$. Particularly, we have
$\lim_{n\rightarrow\infty}\zeta(w_m+i\tau_{n})=\zeta(w_m)$
and, consequently, there exists $n_0\in\mathbb{N}$ such that
$|\zeta(w_m+i\tau_{n})|<\Frac{1}{m}$ for each $n\geq n_0$. Finally,
since $\{\tau_{n}\}$ is unbounded, there exists an integer number
$p_0\geq n_0$ such that $t_m+\tau_{p_0}>\tau$ and
$|\zeta(w_m+i\tau_{p_0})|<\Frac{1}{m}$. Thus we can construct a
sequence $\{w_m^*\}\subset \{s\in\mathbb{C}:1<\operatorname{Re}s<
\sigma_0,\operatorname{Im}s>\tau\}$ verifying
$|\zeta(w_m^*)|<\Frac{1}{m}$ for each $m\in\mathbb{N}$. Hence the
result holds.
\end{proof}

Finally, the following corollary is related to \cite[Theorem 8.6
(A)]{Titchmarsh}.

\begin{corollary}\label{ult}
There exists a sequence of positive numbers $\{t_n\}_{n\geq 1}$
such that\\ $\Lim_{n\rightarrow\infty}\zeta(1+it_n)=0$.
\end{corollary}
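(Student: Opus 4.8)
The plan is to deduce the statement from the fact, established in Corollary~\ref{positive}, that $\inf\{|\zeta(s)|:1<\operatorname{Re}s<\sigma_0,\ \operatorname{Im}s>\tau\}=0$ for every $\sigma_0>1$ and every $\tau>0$. It clearly suffices to prove that $\inf\{|\zeta(1+it)|:t>0\}=0$, since then a sequence $\{t_n\}$ of positive numbers with $\zeta(1+it_n)\to 0$ can be extracted at once. First I would record that the small values of $|\zeta|$ furnished by Corollary~\ref{positive} must cluster against the boundary line: fixing $\sigma_0=2$ and applying the corollary with $\tau=n$ produces points $s_n=\sigma_n+it_n$ with $1<\sigma_n<2$, $t_n>n$ and $|\zeta(s_n)|<1/n$; passing to a subsequence so that $\sigma_n\to\sigma^\ast\in[1,2]$, the lower bound $\inf_{\operatorname{Re}s=\sigma}|\zeta(s)|=\zeta(2\sigma)/\zeta(\sigma)>0$ from Lemma~\ref{prop1} forces $\sigma^\ast=1$. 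Thus $|\zeta|$ attains arbitrarily small values arbitrarily close to, but to the right of, the line $\operatorname{Re}s=1$, and at heights tending to infinity.

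The delicate point is to transfer these small values onto the line itself, and here I would argue by contradiction using a Phragm\'en--Lindel\"of estimate. Suppose $\inf\{|\zeta(1+it)|:t\geq\tau\}=c>0$ for some $\tau>0$. Consider $g(s):=1/\zeta(s)$ on the half-strip $Q=\{s=\sigma+it:1\leq\sigma\leq\sigma_0,\ t\geq\tau\}$; since $\zeta$ is zero-free on $\{\operatorname{Re}s\geq1\}$ and its only pole lies at $s=1\notin Q$, $g$ is holomorphic on $Q$. On the three edges of $\partial Q$ it is bounded: by $1/c$ on $\{\sigma=1\}$, by $\zeta(\sigma_0)/\zeta(2\sigma_0)$ on $\{\sigma=\sigma_0\}$ (Lemma~\ref{prop1}), and by a constant on the compact bottom segment $\{t=\tau\}$. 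Since $1/\zeta$ is of finite order in this region, Phragm\'en--Lindel\"of for a half-strip yields a bound $|g|\leq K$ throughout $Q$, i.e.\ $|\zeta(s)|\geq 1/K>0$ on the whole open strip, which contradicts Corollary~\ref{positive}. Hence $c=0$ and the desired sequence exists.

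The main obstacle is precisely this boundary transfer. The soft, almost-periodic machinery of the paper (Corollaries~\ref{positive} and~\ref{prob1}) operates only on reduced strips with $\operatorname{Re}s>1$, and it cannot be pushed to $\operatorname{Re}s=1$ directly: on that line $\zeta(1+it)$ is \emph{unbounded}, hence not almost periodic, so the compactness arguments degenerate there. A naive continuity estimate $|\zeta(1+it_n)-\zeta(\sigma_n+it_n)|\leq(\sigma_n-1)\sup_{1\leq u\leq\sigma_n}|\zeta'(u+it_n)|$ is not obviously conclusive, because the heights $t_n$ grow uncontrollably as $\sigma_n\downarrow1$ while $|\zeta'|$ grows with $t$; this is why I would route the argument through the growth/Phragm\'en--Lindel\"of input (all standard and available in Titchmarsh) rather than through the equivalence-class formalism alone.
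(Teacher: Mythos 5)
Your proof is correct and takes essentially the same route as the paper's: assume $|\zeta(1+it)|$ is bounded away from zero for large $t$, apply the Phragm\'en--Lindel\"of theorem to $1/\zeta$ on the half-strip $\{1\leq\operatorname{Re}s\leq\sigma_1,\ \operatorname{Im}s>\tau\}$ (with Lemma \ref{prop1} bounding the right edge and compactness the bottom), and contradict Corollary \ref{positive}. If anything, you are more careful than the paper, which invokes Phragm\'en--Lindel\"of without mentioning the a priori finite-order growth hypothesis on $1/\zeta$ that you explicitly flag; your preliminary clustering observation is harmless but not needed.
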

\begin{proof}
%
Suppose that $\Frac{1}{\zeta(1+it)}$ is bounded as
$t\rightarrow\infty$. Given $\tau>0$ and $\sigma_1>1$, let $M_1=\sup\left\{\left|\Frac{1}{\zeta(1+it)}\right|:t>\tau\right\}<\infty$, $M_2=\sup\left\{\left|\Frac{1}{\zeta(\sigma_1+it)}\right|:t>\tau\right\}$ and $M_3=\max\left\{\left|\Frac{1}{\zeta(\sigma+i\tau)}\right|:0\leq \sigma\leq\sigma_1\right\}$. In this case,
from Lemma \ref{prop1} and Phragm\'{e}n and Lindel\"{o}f's theorem \cite[Chapter 5]{Titchmarsh2}, we have that
$\Frac{1}{\zeta(s)}$ is bounded throughout the region
$R=\{s\in\mathbb{C}:1\leq \operatorname{Re}s\leq
\sigma_1,\operatorname{Im}s>\tau\}$. In  fact, an admisible upper bound for the function $\Frac{1}{\zeta(s)}$ in $R$ is $M=\max\{M_1,M_2,M_3\}$. Nevertheless, by Corollary
\ref{positive}, this is false. Consequently, we
get that $\Frac{1}{\zeta(1+it)}$ is unbounded as
$t\rightarrow\infty$ and thus there exists a sequence of positive
numbers $\{t_n\}_{n\geq 1}$ such that
$\Lim_{n\rightarrow\infty}\zeta(1+it_n)=0$.
\end{proof}

Finally, we would like to add the following remarks.

\begin{remark}\label{remarkfinal}
If $\Lambda_A=\{\gamma_1,\gamma_2,\ldots,\gamma_j,\ldots\}$ is a countable set of distinct aligned frequencies or exponents (not necessarily real numbers), then the main results in this paper can be easily extended to the exponential sums of the form
$$\sum_{j\geq 1}a_je^{\gamma_js},\ a_j\in\mathbb{C},\ \gamma_j\in\Lambda_A.$$
In this case, we must change the vertical lines, strips or translates by crosswise ones which are perpendicular to the line given by the frequencies.
\end{remark}

\begin{remark}
Given $\Lambda=\{\lambda_1,\lambda_2,\ldots,\lambda_j,\ldots\}$ a set of exponents, consider $A_1(p)$ and $A_2(p)$ two exponential sums in the class $\mathcal{S}_{\Lambda}$, say
$A_1(p)=\sum_{j\geq1}a_je^{\lambda_jp}$ and $A_2(p)=\sum_{j\geq1}b_je^{\lambda_jp}.$
We will say that $A_1$ is Bohr-equivalent to $A_2$ if $a\sim b$, where $a,b:\Lambda\to\mathbb{C}$ are the functions given by $a(\lambda_j):=a_j$ y $b(\lambda_j):=b_j$, $j=1,2,\ldots$ and $\sim$ is in Definition \ref{DefEquiv0}. Now, fixed a basis $G_{\Lambda}$ for $\Lambda$, for each $j\geq1$ let $\mathbf{r}_j$ be the vector of rational components verifying (\ref{errej}).  
Thus it is easy to prove that $A_1$ is Bohr-equivalent to $A_2$ 
if and only if there exists $\mathbf{x}_0=(x_{0,1},x_{0,2},\ldots,x_{0,k},\ldots)\in \mathbb{R}^{\sharp G_{\Lambda}}$
such that $b_j=a_j e^{<\mathbf{r}_j,\mathbf{x}_0>i}$ for every $j\geq 1$. 

From this and Proposition \ref{DefEquiv}, it is worth noting that Definition \ref{DefEquiv00} and definition of Bohr-equivalence are equivalent in the case that it is possible to obtain an integral basis for the set of exponents $\Lambda$. Consequently, all the results of this paper which can be formulated in terms of an integral basis are also valid under the Bohr-equivalence (in particular, those related to the finite exponential sums in Section \ref{section3} and the Riemann zeta function in Section \ref{ms}).
\end{remark}

\noindent \textbf{Acknowledgements.} The authors thank the anonymous referee for its valuable comments on our manuscript in The Ramanujan Journal which led us to generalize our results. The first author's research was partially supported by Generalitat Valenciana under project GV/2015/035.

\bibliographystyle{amsplain}

\begin{thebibliography}{9}
\bibitem{Apostol} T.M. Apostol, Modular functions and Dirichlet series in number theory, Springer-Verlag, New York, 1990.

\bibitem{Besi} A.S. Besicovitch, Almost periodic functions, Dover, New York, 1954.

\bibitem{Bochner} S. Bochner, A new approach to almost periodicity, Proc. Nat. Acad. Sci., \textbf{48} (1962), 2039-2043.

\bibitem{BohrAnalytic} H. Bohr, Zur Theorie der fastperiodischen Funktionen. (German) III. Dirichletentwicklung analytischer Funktionen, Acta Math. \textbf{47} (3) (1926), 237-281.

\bibitem{Bohr} H. Bohr, Almost periodic functions, Chelsea, New York, 1951.

\bibitem{Bohr2} H. Bohr, Contribution to the theory of almost periodic functions, Det Kgl. danske Videnskabernes Selskab. Matematisk-fisiske meddelelser. Bd. XX. Nr. 18, Copenhague, 1943.

\bibitem{Corduneanu1} C. Corduneanu, Almost Periodic Functions, Interscience publishers, New York, London, Sydney, Toronto, 1968.

\bibitem{Corduneanu} C. Corduneanu, Almost Periodic Oscillations and Waves, Springer, New York, 2009.

\bibitem{Favorov1} S. Yu. Favorov, Zeros of holomorphic almost periodic functions. J. Anal. Math. \textbf{84} (2001), 51-66.

\bibitem{Fink} A.M. Fink, Almost Periodic Differential Equations, Lecture Notes in Math. Vol.
377, Springer-Verlag, New York, 1974.

\bibitem{Hardy} G.H. Hardy, E.M. Wright, An Introduction to the Theory of
Numbers, Oxford University Press, Oxford, 1979.

\bibitem{Jessen} B. Jessen, Some aspects of the theory of almost periodic functions, in Proc. Internat. Congress Mathematicians Amsterdam, 1954, Vol. 1, North-Holland, 1954, pp. 304--351.

\bibitem{KaraVoro} A.A. Karatsuba, S.M. Voronin, The Riemann zeta function. Walter de Gruyter \& Co., Berlin, 1992.

\bibitem{Lauri} A. Laurin\v{c}ikas, Universality of the Riemann zeta-function, J. Number Theory \textbf{130} (2010), 2323-2331.

\bibitem{Lauri2} A. Laurin\v{c}ikas, W. Schwarz, J. Steuding, The universality of general Dirichlet series, Analysis (Munich) \textbf{23} (1) (2003), 13-26.

\bibitem{Lehman} R.S. Lehman, On Liouville's Function, Math. Comput. \textbf{14} (1960), 311-320.


\bibitem{equivalenceclasses} J.M. Sepulcre, T. Vidal, Equivalence classes of exponential polynomials with the same set of zeros, Complex Var. Elliptic Equ., \textbf{61} (2) (2016), 225-238.

\bibitem{Titchmarsh} E.C. Titchmarsh, The theory of the Riemann zeta-function, Oxford Science publication
(second edition), London, 1986.

\bibitem{Titchmarsh2} E.C. Titchmarsh, The Theory of Functions, Oxford University Press (second edition), London, 1976.
\end{thebibliography}

\end{document}